\documentclass[12pt,table]{article}
\usepackage{amsmath,amsthm,amssymb}
\usepackage{enumitem}
\usepackage[font=small,labelfont=bf]{caption}

\theoremstyle{plain}
\newtheorem{theorem}{Theorem}

\newtheorem{lemma}[theorem]{Lemma}
\newtheorem{corollary}[theorem]{Corollary}
\newtheorem{conjecture}[theorem]{Conjecture}

\theoremstyle{definition}
\newtheorem{remark}[theorem]{Remark}

\usepackage{wasysym}
\usepackage[usenames,dvipsnames]{color}
\usepackage{tikz}
\usetikzlibrary{decorations.pathmorphing}
\usepackage{xcolor}
\usepackage{pgfplots}
%\usetikzlibrary{snakes}
\usetikzlibrary{decorations.markings}
\usetikzlibrary{arrows}
\usetikzlibrary{arrows.meta}
\usetikzlibrary{shapes.geometric}
\usetikzlibrary{shapes,backgrounds}
\usepgfplotslibrary{fillbetween}
\usetikzlibrary{patterns}
\tikzstyle{empty}=[circle,draw=black!80,thick]
\tikzstyle{emptyn}=[circle,draw=black!80,fill=white,scale=0.5] 
\tikzstyle{nero}=[circle,draw=black!80,fill=black!80,thick] 
\usetikzlibrary{positioning, fit, calc}

\setlist[enumerate]{align=left}

\setlength{\textheight}{22.5cm} 
\setlength{\textwidth}{6.7in}
\setlength{\topmargin}{1pt} 
\textheight 600pt
\setlength{\evensidemargin}{1pt}
\setlength{\oddsidemargin}{1pt} 
\setlength{\parskip}{2mm} 
%\setlength{\parindent}{0mm}
%\renewcommand{\baselinestretch}{1.1}

% PK commands

\newcommand{\sm}{\setminus}

\newcommand{\eps}{\varepsilon}

\DeclareMathOperator{\lin}{lin}

\begin{document}
	\title{The Brown--Erd\H{o}s--S\'os Conjecture for hypergraphs of large uniformity}
	
	\author{Peter Keevash\thanks{Mathematical Institute, University of Oxford, Andrew Wiles Building, Radcliffe Observatory Quarter, Woodstock Road, Oxford, United Kingdom. E-mail: \texttt{\{keevash, longj\}@maths.ox.ac.uk}. \newline \hspace*{1.8em}Research supported
in part by ERC Consolidator Grant 647678.
} \and Jason Long\footnotemark[1]}
	
	\maketitle
	
\begin{abstract}
We prove the well-known Brown--Erd\H{o}s--S\'os Conjecture for hypergraphs of large uniformity in the following form:\ any dense linear $r$-graph $G$ has $k$ edges spanning at most $(r-2)k+3$ vertices, provided the uniformity $r$ of $G$ is large enough given the linear density of $G$, and the number of vertices of $G$ is large enough given $r$ and $k$.
\end{abstract}
	
	\section{Introduction}

	In an $r$-graph (i.e.~an $r$-uniform hypergraph), an \emph{$(s,t)$-configuration} is a collection of $t$ edges which span at most $s$ vertices. This paper concerns the following conjecture of Brown, Erd\H{o}s and S\'os~\cite{BES}, which has become one of the most well-known open problems in Extremal Combinatorics, repeatedly revisited by Erd\H{o}s in his problem papers (e.g.~\cite{Erdos81}).

	\begin{conjecture}[Brown--Erd\H{o}s--S\'os]\label{BESconj}
	For any $r>t \ge 2$ and $k\ge 3$ any $r$-graph on $n$ vertices
	with no $((r-t)k+t+1,k)$-configuration has $o(n^{t})$ edges.
	\end{conjecture}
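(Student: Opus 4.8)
Fix arbitrary $r>t\ge 2$ and $k\ge 3$; the plan is to establish the full statement by contraposition, assuming $G$ is an $r$-graph on $n$ vertices with $e(G)\ge \eps n^t$ for some fixed $\eps>0$ and producing an $((r-t)k+t+1,k)$-configuration once $n$ is large. The guiding model is $t=2,r=3,k=3$, which is exactly the Ruzsa--Szemer\'edi $(6,3)$-theorem: one encodes the $3$-graph as a tripartite graph, notes that a $(6,3)$-configuration becomes a triangle, and applies the triangle removal lemma, since $\Theta(n^2)$ triangle-carrying edges cannot have all their triangles destroyed by deleting $o(n^2)$ edges. The entire difficulty is to find the right higher-uniformity analogue of ``triangle'' and the matching removal statement for all $r>t\ge 2$.

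First I would reduce to an essentially linear host. If many pairs of edges of $G$ meet in at least $t$ vertices then short configurations already proliferate, so a supersaturation/cleaning dichotomy should let me assume that distinct edges meet in fewer than $t$ vertices, at the cost of only a constant factor in $e(G)$. In this regime $k$ generic edges span about $rk$ vertices whereas the target allows only $(r-t)k+t+1$, so the sought configuration is an atypically compressed cluster in which each edge shares a total of roughly $t$ vertices with the rest; the task becomes to show that density alone forces such a cluster.

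The core step is a combined removal-and-embedding argument. I would pass to a weighted $t$-uniform auxiliary structure recording, for each edge $e$ of $G$, a choice of a ``core'' $t$-set inside $e$ together with its $r-t$ ``leaves'', arranged so that the absence of an $((r-t)k+t+1,k)$-configuration forces the auxiliary structure to be free of a fixed $k$-edge pattern; the appropriate hypergraph removal lemma would then delete all copies using only $o(n^t)$ weight, contradicting $e(G)\ge\eps n^t$. The configuration itself I would build by induction on $k$: given a compressed cluster of $k-1$ edges, use the density of the links of its core to attach one further edge that shares $t$ vertices with the current union (distributed as single vertices across $t$ earlier edges, consistent with the near-linearity) and adds $r-t$ new vertices, so that after $k$ steps the union has $(r-t)k+t\le (r-t)k+t+1$ vertices. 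When $r$ is large relative to $\eps$ each edge carries enough leaves that these attachment choices can be made greedily or via a random nibble, which is precisely the mechanism making the large-uniformity case tractable.

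The main obstacle is the general case $t\ge 3$ at \emph{bounded} uniformity. A single application of a removal lemma destroys only one type of overlap, whereas an $((r-t)k+t+1,k)$-configuration mixes overlaps of several sizes, so the induction on $k$ accumulates slack; the best current bounds therefore miss the conjectured threshold by an additive term (of order $\log k$ already when $t=2$). Eliminating this slack means carrying out the embedding in a doubly sparse regime -- an $r$-graph with only $n^t$ edges, where $t<r$ puts us far below the density at which ordinary hypergraph regularity counts anything -- and doing so with no loss in the vertex budget. This is where the large-uniformity hypothesis of the proven case genuinely manufactures the room that a proof of the full conjecture would have to create intrinsically.
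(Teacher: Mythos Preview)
The statement you are trying to prove is Conjecture~\ref{BESconj}, the full Brown--Erd\H{o}s--S\'os Conjecture. The paper does \emph{not} prove it; it is presented as an open problem, and the paper's main contribution is Theorem~\ref{densityTS}, which settles only the special case where the uniformity $r$ is large as a function of the linear density $\eps$. So there is no ``paper's own proof'' to compare against, and your proposal is an attempt at an open problem.

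Your write-up is not a proof but a strategy outline, and you yourself identify the gap in the final paragraph: the ``combined removal-and-embedding argument'' is never actually specified. You say you would encode edges via a core $t$-set plus leaves so that the forbidden configuration becomes a fixed pattern to which hypergraph removal applies, but you do not say what that pattern is, nor why freeness from $((r-t)k+t+1,k)$-configurations translates into freeness from it. The inductive attachment step --- adding one edge sharing exactly $t$ vertices with the current union --- is precisely where all known approaches lose additive slack in the vertex count, and nothing in the sketch explains how you avoid this; indeed you concede that current methods miss by an additive $O(\log k/\log\log k)$. In short, the proposal names the difficulty correctly but does not overcome it.

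For context, even in the large-$r$ regime the paper's argument is structurally different from what you sketch. The paper first reduces to $t=2$ and linear $G$, then works in the auxiliary \emph{bow-tie graph} $B(G)$ whose vertices are intersecting pairs of edges and whose edges come from triples of pairwise-intersecting edges with empty triple intersection. Triangle removal gives $B(G)$ many edges, forcing either a component of size $\ge k^2$ (handled by growing the configuration one edge at a time along bow-tie adjacencies) or many components of average degree $>12r$ (each yielding an $((r-2)u,u)$-configuration that can be concatenated). The large-$r$ hypothesis enters only to guarantee that the average degree $\delta(\eps)r^2$ of $B(G)$ exceeds the threshold $13r$ needed for the dichotomy; it is not used, as in your sketch, to create room for a greedy or nibble-type embedding.
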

	
	Despite receiving much attention, this conjecture remains open in almost all cases. It is well-known (see e.g.~\cite[Proposition 1.2]{CGLS}) that the conjecture would follow from the case $t=2$, in which case the conjecture reduces (see e.g.~\cite{S74}) to the following statement on hypergraphs that are \emph{linear}, meaning that no two distinct edges intersect in more than one vertex. Given a linear $r$-graph $G$ on $n$ vertices, we define its \emph{linear density} by $d_{\lin}(G) = e(G) \tbinom{r}{2} / \tbinom{n}{2}$.
		
	\begin{conjecture}\label{BESconj2}
		For any $\eps>0$ and $r,k\ge 3$ there exists $n_0=n_0(\eps, r, k)$ such that for all $n\ge n_0$ any linear $r$-graph $G$ on $n$ vertices with no $((r-2)k+3,k)$-configuration has $d_{\lin}(G) < \eps$.
	\end{conjecture}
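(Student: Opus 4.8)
The plan is to prove the contrapositive: assuming $G$ is a linear $r$-graph with $d_{\lin}(G) \ge \eps$, so that $e(G) \ge \eps \binom{n}{2}/\binom{r}{2} = \Omega_{\eps,r}(n^2)$, I will locate $k$ edges spanning at most $(r-2)k+3$ vertices, for the given fixed $r \ge 3$. The first step is to recast the target in terms of overlaps. If $k$ edges $e_1,\dots,e_k$ span $s$ vertices, counting incidences gives $s = rk - \sum_v (d(v)-1)$, where $d(v)$ is the number of chosen edges through $v$; hence the configuration I seek is exactly a family of $k$ edges with total overlap $\sum_v(d(v)-1) \ge 2k-3$. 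Since $G$ is linear, distinct edges meet in at most one vertex, so the cheapest way to accumulate overlap is through vertices of degree $2$: a set of $k$ edges inducing at least $2k-3$ intersecting pairs realised at \emph{distinct} vertices already yields the configuration. Thus it suffices to find such a family, i.e.\ a copy of a prescribed ``overlap pattern'' inside the intersection graph $H$ on $E(G)$ (joining two edges when they share a vertex), with the shared vertices distinct. Density of $G$ makes $H$ very dense: incidence counting gives $H$ roughly $\Omega(n^2)$ vertices and $\Omega(n^3)$ edges.

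The main engine will be the regularity method. For the base case $r=k=3$ this is precisely the Ruzsa--Szemer\'edi $(6,3)$-theorem: three edges meeting pairwise at three distinct vertices span $6=(r-2)k+3$ vertices, and the triangle removal lemma forces such a linear triangle once $e(G)=\Omega(n^2)$. For general $r$ and $k$ I would apply a hypergraph regularity decomposition to the link/incidence structure of $G$, pass to a bounded reduced object, and locate the overlap pattern there: since a graph on $k$ vertices with $2k-3$ edges has bounded average degree, a density and min-degree argument finds it in the dense reduced structure, after which a counting lemma lifts it to genuine copies in $G$. The $\Omega(n^2)$ edge count rules out the alternative in which one could delete the pattern by removing few edges, so a configuration must survive.

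The hard part will be matching the \emph{exact} threshold $(r-2)k+3$, equivalently producing overlap exactly $2k-3$, \emph{uniformly for every fixed $r \ge 3$}. Standard counting and removal lemmas deliver \emph{generic} copies, in which the $k$ edges meet in only about $k-1$ places (overlap $\sim k-1 < 2k-3$ for $k \ge 4$); these are far too spread out to qualify, and detecting the required degenerate, doubly-overlapping patterns — while certifying that the coincidences occur at distinct vertices, so that linearity is respected and the span is genuinely reduced to $(r-2)k+3$ — is exactly where the regularity/counting machinery fails to be quantitative enough. This is most delicate precisely when $r$ is small, since then each of the $r$ slots in an edge offers little room to arrange $2k-3$ coincidences without forcing two edges to meet twice; it is this degenerate-count obstruction that keeps the conjecture open in general. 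I would attack it by iterating a density increment on the intersection structure: at each stage either the concentrated configuration appears directly, or one restricts to the subfamily of edges through a common small core, inside which the local overlap density has strictly increased, so that after boundedly many steps the overlap is forced up to $2k-3$.
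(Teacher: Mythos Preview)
The statement you are attempting is Conjecture~\ref{BESconj2}, which the paper does \emph{not} prove: it is a well-known open problem, and even the case $r=3$, $k=4$ (the $(7,4)$ Problem) remains unsolved. The paper establishes only Theorem~\ref{densityTS}, the special case in which $r \ge r_0(\eps)$ is taken large as a function of the density. There is therefore no proof in the paper to compare your attempt against.

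Your proposal has a genuine gap at exactly the point you yourself flag. You correctly reduce the task to finding $k$ edges with total overlap $\sum_v (d(v)-1) \ge 2k-3$, and you correctly note that regularity and counting lemmas only deliver generic configurations with overlap about $k-1$. But your proposed remedy --- a density increment obtained by ``restricting to the subfamily of edges through a common small core'' --- is not an argument: passing to the link of a vertex drops the edge count from $\Omega(n^2)$ to $O(n)$, so the hypothesis $d_{\lin}(G)\ge\eps$ is destroyed rather than strengthened, and you specify no alternative potential function that actually increases under the restriction. No known density-increment or removal argument closes this gap for small $r$. For contrast, the paper's method in the large-$r$ regime is structurally different from what you sketch: it works in the bow-tie graph $B(G)$, uses triangle removal to give $B(G)$ average degree $\Omega(\delta(\eps) r^2)$, and then exploits $r$ large to force either a large connected component (grown one edge at a time, each new edge saving two vertices) or many dense components yielding $((r-2)u,u)$-configurations that can be concatenated. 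The large-$r$ assumption is exactly what makes the average-degree bound $\delta(\eps) r^2$ beat the threshold $12r$ needed for the dense-component alternative; for $r=3$ this comparison fails and the argument does not go through.
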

	
	The importance of this conjecture may be gauged from the large literature surrounding even its simplest cases. The first breakthrough was achieved by Ruzsa and Szemer\'edi~\cite{RS}, who proved their celebrated `(6,3) Theorem' (the case $k=r=3$) via Szemer\'edi's regularity lemma~\cite{SRL}, which implies Roth's theorem~\cite{Roth} on 3-term arithmetic progressions via the `Triangle Removal Lemma' (see \cite{RothFromTR} and the survey \cite{CF} for more on removal lemmas and their many applications). Erd\H{o}s, Frankl and R\"{o}dl~\cite{EFR} then established the case $k=3$ for all $r$. However, the conjecture remains open for $k>3$; even the `(7,4) Problem' for $3$-graphs is generally considered to be very challenging. The most recent progress towards the conjecture includes work of Conlon, Gishboliner, Levanzov and Shapira~\cite{CGLS} showing that one can find a configuration in which the number of additional vertices beyond the conjecture is $O(\log k/\log \log k)$, independent work of Nenadov, Sudakov and Tyomkyn~\cite{others} and Long~\cite{me} establishing the conjecture when $G$ has `group structure', and a Ramsey variant due to Shapira and Tyomkyn~\cite{ST}.
		 
	 Our main result establishes Conjecture~\ref{BESconj2} when $r$ is sufficiently large given $\eps$. 
	 
	 \begin{theorem}\label{densityTS}
	 	For any $\eps>0$ there is $r_0=r_0(\eps)$ such that for all $r\ge r_0$ and for all $k\ge 3$ there exists $n_0=n_0(r,k)$ such that any linear $r$-graph $G$ on $n \ge n_0$ vertices with no $((r-2)k+3,k)$-configuration has $d_{\lin}(G) < \eps$.
	  \end{theorem}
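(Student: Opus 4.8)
The plan is to pass to the crossing structure of $G$ and exploit that, once $r$ is large given $\eps$, this structure is forced to be unusually dense.

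\textbf{Reformulation.} Assume $d_{\lin}(G)\ge\eps$ and let $\Gamma$ be the \emph{cover graph} on $V(G)$: join $u\sim v$ whenever some edge of $G$ contains both. By linearity the cliques $\{\binom e2:e\in G\}$ partition $E(\Gamma)$ exactly, so $e(\Gamma)=e(G)\binom r2=d_{\lin}(G)\binom n2\ge\eps\binom n2$. For a family $F$ of edges set $\beta(F)=|V(F)|-(r-2)|F|$; then $\beta(\{e\})=2$, adding an edge $e$ to $F$ changes $\beta$ by $2-|e\cap V(F)|$, and a $((r-2)k+3,k)$-configuration is exactly a family $F$ with $|F|=k$ and $\beta(F)\le3$, equivalently $\sum_{v\in V(F)}(\deg_F(v)-1)\ge 2k-3$, where $\deg_F$ counts edges of $F$ through a vertex. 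A brief count shows that in the tight case there are only $\asymp 2k$ vertices of $F$-degree $\ge2$, each in just two edges, while the other $\asymp(r-4)k$ vertices are private; as private vertices are trivial to place when $n$ is huge, the task is essentially to glue $k$ edges of $G$ along $\asymp2k$ shared vertices, i.e.\ to realise a blow-up of some graph of average degree about $4$ on $k$ vertices inside $G$.

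\textbf{The role of large $r$.} Let $\mathcal A$ be the \emph{crossing graph} on $E(G)$, with $e\sim f$ when $e\cap f\ne\es$, so $\deg_{\mathcal A}(e)=\sum_{v\in e}(\deg_G(v)-1)$. After deleting vertices of $G$ of degree below a constant fraction of the average (keeping $d_{\lin}\ge\eps/2$), $\mathcal A$ has $N=\Theta(\eps n^2/r^2)$ vertices and minimum degree $\Omega(\eps n)$, whence $e(\mathcal A)=\Omega(\eps^2 n^3/r^2)$. The crucial point is that $e(\mathcal A)/N^{3/2}\gtrsim\eps^2 r$, which we may make as large as we please by choosing $r_0(\eps)$ large: thus $\mathcal A$ beats the Zarankiewicz bound by a factor growing with $r$, so it contains $K_{2,t}$ — indeed most pairs lie in plenty of $C_4$'s — for every fixed $t$ we ask for. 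A copy of $K_{2,t}$ in $\mathcal A$ whose two sides meet in $2t$ distinct vertices, together with an extra crossing joining its left pair, is a sub-configuration of $G$ with $t+2$ edges and $\beta=3$: the basic building block.

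\textbf{Assembling.} One chains such blocks $B_1,\dots,B_q$ so that consecutive blocks share a crossing pair while no two blocks share any other vertex — costless since $n$ is vast — and so that no edge lies in more than a bounded number of blocks (so no edge is asked to cross more than $r$ others). As $\beta$ obeys inclusion--exclusion when overlaps are confined to shared edges, a path-like chain of $\beta=3$ blocks overlapping in crossing pairs again has $\beta=3$, for any $q$; taking $q\asymp k/t$ yields $\asymp k$ edges, and the last adjustment to exactly $k$ is absorbed by the single vertex of slack over $\beta=2$. To keep the supply of blocks and junctions robust one first passes to a portion of $G$ on which $\Gamma$ is uniformly dense, by a regularity- or dependent-random-choice-type reduction, which is affordable since $n_0$ is allowed to depend on $r,k$ however we like. (An alternative is to grow $F$ greedily from a loose triangle — given by the $k=3$ case of Erd\H{o}s--Frankl--R\"{o}dl — always adding an edge $e\notin F$ with $|e\cap V(F)|\ge2$, which exists precisely when $e(\Gamma[V(F)])>|F|\binom r2$, and is forced by the density of $\Gamma$ once $|F|\gtrsim1/\eps$.)

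\textbf{Main obstacle.} The delicate heart is doing this \emph{uniformly in $k$} with only the one vertex of slack the statement grants ($3$ rather than $2$). Every naive shape — loose cycles with chords, a spine carrying a loose path, a crude chain of books — breaks because some edge is pushed into more than $r$ crossings, capping $k$ in terms of $r$; the configuration must truly be an unconstrained union of cherries with the crossings spread over $\asymp2k$ distinct vertices of small $F$-degree. Equivalently, one must find in $\mathcal A$ a structure on $k$ vertices with $\gtrsim2k$ edges that is also ``thin'' for the clique partition $\{Q_v:v\in V(G)\}$ of $E(\mathcal A)$ — few chosen vertices per $Q_v$ — so that counting crossings is not spoiled by high-multiplicity vertices and genuinely certifies $\sum(\deg_F-1)\ge2k-3$. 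Guaranteeing such a structure, whether by controlling every junction and multiplicity in the blocks-and-chains scheme or by showing the greedy process never stalls, is where both hypotheses $r\ge r_0(\eps)$ and $n\ge n_0(r,k)$ are indispensable, and where I expect the real difficulty to lie.
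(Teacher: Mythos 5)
Your reformulation ($\beta(F)=|V(F)|-(r-2)|F|$ with target $\beta\le 3$, and the computation showing the crossing graph $\mathcal{A}$ has $\Theta(\eps n^2/r^2)$ vertices and $\Omega(\eps^2n^3/r^2)$ edges, so that large $r$ pushes it far above the $K_{2,2}$-free threshold) is sound and genuinely parallel to the paper, whose bow-tie graph lives on crossing pairs rather than on edges and whose use of large $r$ is the analogous inequality $13r<\delta(\eps)r^2$; your $K_{2,t}$-plus-one-crossing block is indeed a correct $\beta=3$ configuration. But the argument stops exactly where you say it does: assembling these blocks into a $k$-edge family with $\beta\le3$, uniformly in $k$, is not a detail to be flagged for later --- it is the entire content of the theorem, and nothing in the proposal supplies it. Chaining blocks along shared crossing pairs requires the junction pair of each new block to be a crossing pair sitting inside the previous block, with all other vertices fresh, and you give no mechanism producing such pairs; ``costless since $n$ is vast'' does not help, because the whole construction lives inside a vertex set of bounded size $\approx rk$, which no global density, regularity, or dependent-random-choice statement about $\Gamma$ or $\mathcal{A}$ controls. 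The parenthetical greedy alternative has the same flaw: the count $e(\Gamma[V(F)])>|F|\binom{r}{2}$ needs $\Gamma$ to be dense on the specific, constant-sized, process-determined set $V(F)$, which cannot be extracted from $d_{\lin}(G)\ge\eps$; and even granting it, the process may stall while $|F|\le 1/\eps$, before the count bites.

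For comparison, the paper closes this gap with two ideas absent from your sketch. First, it works with \emph{triangles} of edges (three edges pairwise crossing at three distinct vertices, whose abundance is extracted via the triangle removal lemma) rather than with $C_4$'s in $\mathcal{A}$; growing a configuration along a connected component of the resulting bow-tie graph then automatically attaches each new edge of $G$ through at least two already-used vertices, so $\beta\le 3$ is preserved at every step and no edge is ever overloaded (Lemma~\ref{largeCtool}). Second, the dichotomy of Lemma~\ref{dichotomy}: if no component is large enough to supply $k$ edges, then many components are \emph{dense}, and a dense component forces some step of the growth to reuse at least five vertices, yielding a configuration with $\beta\le 0$ (Lemma~\ref{denseCs}); these surplus-free pieces are then accumulated edge-disjointly until their sizes sum to $k$, with one final $\beta\le3$ piece. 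That is precisely the ``crossings spread over $\approx 2k$ vertices of small $F$-degree'' structure you correctly identify as necessary but do not construct.
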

	
We prove Theorem \ref{densityTS} in the next section, building on the methods of Shapira and Tyomkyn~\cite{ST}, who introduced the `bow-tie graph' that plays a key role in the argument. In the final section we conclude with some remarks regarding the tightness of our result and its relationship to the rich literature around the old conjecture of Erd\H{o}s~\cite{ErdosGirth} that there exist Steiner triple systems of arbitrarily high girth.
	 
\section{Proof}

The structure of our argument is broadly similar to that of Shapira and Tyomkyn~\cite{ST}. We start in the next subsection by defining the bow-tie graph and showing that it contains either a large component or many dense components. We analyse these two cases separately in the two subsequent subsections, then complete the proof in the final subsection.

\subsection{The bow-tie graph}

Given a linear $r$-graph $G$, we define the \emph{bow-tie graph} $B(G)$ as follows. The vertex set is the collection of all unordered pairs $(e,f)$ of distinct edges in $G$ such that $|e\cap f| = 1$. The edges of $B(G)$ are given as a union of triangles: given 3 distinct edges $e,f,g\in G$ such that $|e\cap f|=|e\cap g|=|f\cap g|=1$ and $e\cap f \cap g=\emptyset$, we place a triangle between the vertices $(e,f)$, $(e,g)$ and $(f,g)$ in $B(G)$. We call the vertices of $B(G)$ \emph{bow-ties}. Figure~\ref{fig2} shows a bow-tie in $G$, and the configuration in $G$ corresponding to a triangle in $B(G)$.

\begin{figure}
	\centering
		\begin{tikzpicture}[scale=0.7, every node/.style={scale=0.7}]
	\node[xshift=-5cm, yshift=0cm, label={[anchor=south,above=-1mm]\large{$(e_1,e_2)$}}] (C) {\large{$\bullet$}};
	
	\node[red, xshift=-1.6cm, yshift=-1.6cm] {\large{$e_1$}};
	\node[blue, xshift=3.6cm, yshift=-1.6cm] {\large{$e_2$}};
	
	\draw[stealth-stealth,decorate,decoration={snake,amplitude=3pt,pre length=5pt,post length=3pt}] (-4.2,0) -- ++(2,0);
	
	\draw[red] (0,0)	circle (1.8cm);

	\draw[blue] (2,0)	circle (1.8cm);
	
	\node[xshift=-0.8cm, yshift=0cm] {$r-1$};
	\node[xshift=2.8cm, yshift=0cm] {$r-1$};
	\node[xshift=1cm, yshift=0cm] {\large{$1$}};

	\node[xshift=-5cm, yshift=-2.5cm] {};
	\end{tikzpicture}
	\hspace{1.5cm}
	\begin{tikzpicture}[scale=0.7, every node/.style={scale=0.7}]
	\node[xshift=-6cm, yshift=0.7cm, label={[anchor=south,above=-7mm]\large{$(e_1,e_2)$}}] (A) {\large{$\bullet$}};
	\node[xshift=-7cm, yshift=1.7cm, label={[anchor=south,above=-1mm]\large{$(e_1,e_3)$}}] (B) {\large{$\bullet$}};
	\node[xshift=-5cm, yshift=1.7cm, label={[anchor=south,above=-1mm]\large{$(e_2,e_3)$}}] (C) {\large{$\bullet$}};
	\draw (A.center)--(B.center)--(C.center)--(A.center);
	
	\draw[stealth-stealth,decorate,decoration={snake,amplitude=3pt,pre length=5pt,post length=3pt}] (-4.2,1.2) -- ++(2,0);
	
	%\draw[red] (0,0)	circle (1.8cm);
	
	\draw[red, rotate around={30:(0,0)}] (0,0) ellipse (30pt and 60pt);
	\draw[blue, rotate around={-30:(2,0)}] (2,0) ellipse (30pt and 60pt);
	\draw[green!50!black] (1,1.732) ellipse (60pt and 30pt);

	%\draw[blue] (2,0)	circle (1.8cm);
	%\draw[color = green!50!black] (1,1.732)	circle (1.8cm);
	%\node[xshift=1cm, yshift=0.6cm] {$t-2$};
	\node[xshift=-0.3cm, yshift=-0.2cm] {\large{$r-2$}};
	\node[xshift=2.3cm, yshift=-0.2cm] {\large{$r-2$}};
	\node[xshift=1cm, yshift=2.1cm] {\large{$r-2$}};
	\node[xshift=1cm, yshift=-0.6cm] {$1$};
	\node[xshift=0cm, yshift=1.1cm] {$1$};
	\node[xshift=2cm, yshift=1.1cm] {$1$};
	\node[red, xshift=-1.6cm, yshift=-1.6cm] {\large{$e_1$}};
	\node[blue, xshift=3.6cm, yshift=-1.6cm] {\large{$e_2$}};
	\node[color = green!50!black, xshift=1cm, yshift=3.9cm] {\large{$e_3$}};

	\end{tikzpicture}

	\caption{The correspondence between vertices (left) and edges (right) of the bow-tie graph $B(G)$ and configurations in $G$. %The configurations in $G$ show edges $e_i$ each containing $r$ vertices -- the numbers in the regions show how many vertices of $G$ belong to each intersection.
	}\label{fig2}
\end{figure}
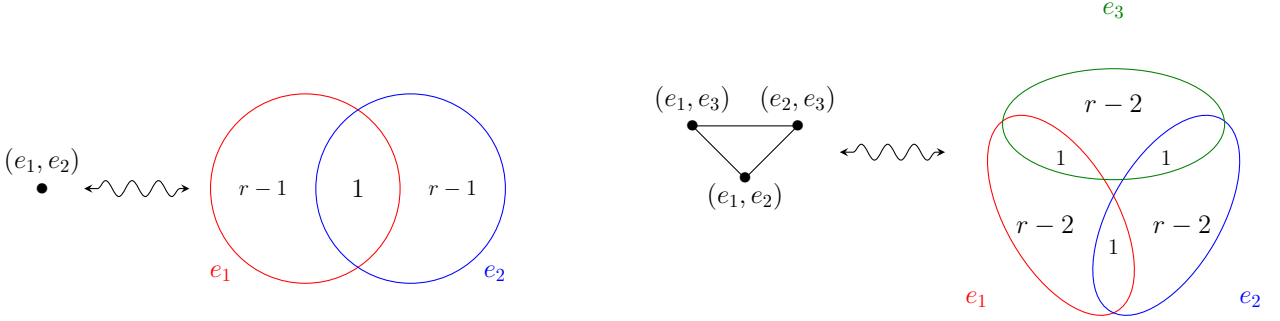

By the \emph{underlying graph} $U(G)$ we mean the graph on the same vertex set of $G$ obtained by replacing every $r$-edge of $G$ with a copy of the complete graph $K_r$ on $r$ vertices.

We begin with a lemma concerning the number of vertices of $B(G)$.

\begin{lemma}\label{vertices}
For any linear $r$-graph $G$ on $n$ vertices with
$d_{\lin}(G) \ge \eps$ and $n \ge 2r/\eps$,
the number $v(B(G))$ of vertices of $B(G)$ satisfies
	\[(\eps/2r)^2 n^3 \le v(B(G)) \le r^{-2}n^{3}.\]
\end{lemma}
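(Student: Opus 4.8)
The plan is to estimate $v(B(G))$ by counting the pairs $(e,f)$ of distinct edges of $G$ with $|e\cap f|=1$. Write this count as $v(B(G)) = \sum_{v} \binom{d(v)}{2} - (\text{pairs meeting in }\ge 2\text{ points})$, where $d(v)$ denotes the degree of $v$ in $G$. Since $G$ is linear, any two edges meet in at most one vertex, so in fact $v(B(G)) = \sum_v \binom{d(v)}{2}$ exactly, with the sum over all $n$ vertices of $G$. Each edge $e$ contributes $r$ to $\sum_v d(v)$, so $\sum_v d(v) = r\, e(G)$, and the linear density hypothesis gives $e(G) = d_{\lin}(G)\binom{n}{2}/\binom{r}{2}$.

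For the upper bound I would bound $\sum_v \binom{d(v)}{2} \le \sum_v \frac{d(v)^2}{2}$ and then note that in a linear $r$-graph the degree of any vertex is at most $(n-1)/(r-1) < n/r$, since the $r-1$ other vertices in each edge through $v$ are distinct across edges. Hence $\sum_v d(v)^2 \le (n/r)\sum_v d(v) = (n/r)\, r\, e(G) = n\, e(G)$, and using $e(G) \le \binom{n}{2}/\binom{r}{2} \le n^2/r^2$ (the maximum possible, from $d_{\lin}\le 1$, or just from linearity) yields $v(B(G)) \le \tfrac12 \cdot n \cdot n^2/r^2 \le r^{-2}n^3$ after checking the constant carefully — the factor $\tfrac12$ absorbs the slack between $\binom{n}{2}$ and $n^2/2$ and between $\binom{r}{2}$ and $r^2/2$. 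For the lower bound I would apply convexity (Jensen): $\sum_v \binom{d(v)}{2} \ge n\binom{\bar d}{2}$ where $\bar d = r\,e(G)/n$ is the average degree. From $d_{\lin}(G)\ge\eps$ we get $e(G) \ge \eps\binom{n}{2}/\binom{r}{2} \ge \eps (n-1)/(2) \cdot \ldots$; more precisely $\bar d = r e(G)/n \ge \eps r \binom{n}{2}/(n\binom{r}{2}) = \eps (n-1)/(r-1) \ge \eps n/r$ (say, for $n$ large), and then $\binom{\bar d}{2} \ge \bar d^2/4 \ge \eps^2 n^2/(4r^2)$ once $\bar d \ge 2$, which holds since $n \ge 2r/\eps$ forces $\bar d \ge \eps n / r \ge 2$. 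Multiplying by $n$ gives $v(B(G)) \ge (\eps^2/4r^2)n^3 = (\eps/2r)^2 n^3$.

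The only real subtlety is getting the constants to land exactly on $(\eps/2r)^2$ and $r^{-2}$ rather than some messier expression, so I would track the $\binom{n}{2}$ versus $n^2/2$ and $\binom{r}{2}$ versus $r^2/2$ comparisons and the $\binom{d}{2}$ versus $d^2/2$ comparison with care, using the hypothesis $n \ge 2r/\eps$ precisely where it is needed (to ensure the average degree is at least $2$, so that $\binom{\bar d}{2}\ge \bar d^2/4$, and to convert $(n-1)/(r-1)$-type quantities into clean $n/r$ bounds). Everything else is a routine application of linearity of $G$ (to pin down $v(B(G))=\sum_v\binom{d(v)}{2}$ and to bound $\Delta(G)$) together with convexity for the lower bound. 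No step here should present a genuine obstacle; the "hard part" is purely bookkeeping of constants.
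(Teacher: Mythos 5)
Your lower bound is essentially identical to the paper's: both write $v(B(G))=\sum_v\binom{d_G(v)}{2}$ (exact by linearity) and apply convexity to the average degree $\bar d\ge\eps(n-1)/(r-1)$, using $n\ge 2r/\eps$ to clean up the constant; that part is correct. Your upper bound, however, takes a genuinely different route: the paper maps each bow-tie to the $(r-1)^2$ two-edge paths it creates in the underlying graph $U(G)$ and uses that distinct bow-ties give distinct paths, whereas you bound $\sum_v\binom{d_G(v)}{2}$ directly from the maximum degree and the edge count. The idea is fine, but two of your intermediate inequalities point the wrong way: for $n>r$ one has $(n-1)/(r-1)>n/r$ and $\binom{n}{2}/\binom{r}{2}=\frac{n(n-1)}{r(r-1)}>n^2/r^2$, so neither $\Delta(G)<n/r$ nor $e(G)\le n^2/r^2$ is available. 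Doing the bookkeeping correctly along your lines, with $\Delta\le\frac{n-1}{r-1}$ and $re(G)\le\frac{n(n-1)}{r-1}$, one gets $v(B(G))\le\frac{\Delta-1}{2}\sum_v d_G(v)\le\frac{n(n-1)(n-r)}{2(r-1)^2}\le\frac{n^3}{2(r-1)^2}$, and this is at most $r^{-2}n^3$ exactly when $2(r-1)^2\ge r^2$, i.e.\ $r\ge 4$; the hoped-for ``absorption by the factor $\tfrac12$'' does not rescue $r=3$. This is not really a defect of your route compared with the paper's: the number of two-edge paths on $n$ vertices is $3\binom{n}{3}$ rather than $\binom{n}{3}$ (each $3$-set carries three such paths, and distinct bow-ties can indeed generate paths on the same $3$-set, e.g.\ inside a triangle of $B(G)$), so the path-counting argument also yields only $\frac{n^3}{2(r-1)^2}$; and at $r=3$ the stated constant is genuinely unattainable, since a Steiner triple system has $v(B(G))=\frac{n(n-1)(n-3)}{8}>n^3/9$ for large $n$. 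Since the lemma is only ever invoked with $r\ge r_0(\eps)$ large, your degree-based argument, with the inequality directions fixed and the harmless restriction $r\ge 4$, serves the paper's purposes just as well as the original proof.
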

\begin{proof}	
For the lower bound we note that $\sum_{v\in G}d_G(v)=re(G)
\ge r \eps \tbinom{n}{2} / \tbinom{r}{2}$, so by convexity
\[v(B(G)) = \sum_{v\in G}{d_G(v) \choose 2}
\ge n{\eps (n-1)/(r-1) \choose 2}
> (\eps/2r)^2 n^3,\]
as $n \ge 2r/\eps$. The upper bound follows from
the observation that each bow-tie contributes
exactly $(r-1)^2$ two-edge paths to the underlying graph $U(G)$;
there are clearly at most $\tbinom{n}{3}$ such paths,
and those corresponding to distinct bow-ties
are distinct by linearity of $G$.
\end{proof}

Next we prove a lower bound on the number of edges of $B(G)$ via the following well-known `Triangle Removal Lemma' of Ruzsa and Szemer\'edi~\cite{RS}.

\begin{theorem}[Triangle Removal Lemma] \label{trl}
For any $\eps>0$ there is $\delta>0$ such that
any graph on $n$ vertices with at most $\delta n^3$ triangles
may be made triangle-free by removing at most $\eps n^2$ edges.
\end{theorem}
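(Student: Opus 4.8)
The plan is to deduce this from Szemer\'edi's regularity lemma by the standard ``clean and count'' argument. Given $\eps>0$, fix auxiliary parameters $\eta=\eps/8$ and $\eps'=\eps'(\eps)>0$ small enough that $\eps'<\eps/8$ and $\eps'<\eta/2$, and apply the regularity lemma to obtain an $\eps'$-regular equitable partition $V(G)=V_0\cup V_1\cup\dots\cup V_M$, where $|V_0|\le\eps' n$, the parts $V_1,\dots,V_M$ have a common size $N\ge n/(2M)$ (for $n$ large), and $8/\eps\le M\le M_0(\eps')$; the lower bound on $M$ may always be imposed.

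First I would \emph{clean} the graph, deleting every edge incident to $V_0$, every edge lying inside some $V_i$, every edge between an irregular pair $(V_i,V_j)$, and every edge between a pair $(V_i,V_j)$ of density less than $\eta$. The total number of deleted edges is at most
\[ \eps' n^2 + M\binom{N}{2} + \eps' M^2 N^2 + \binom{M}{2}\eta N^2 \le \bigl(\eps' + \tfrac1{2M} + \eps' + \tfrac{\eta}{2}\bigr) n^2 < \eps n^2 \]
by the choice of parameters; write $G'$ for the resulting graph.

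Next I would show that either $G'$ is triangle-free or $G$ had at least $\delta n^3$ triangles. Suppose $G'$ contains a triangle on vertices $x\in V_i$, $y\in V_j$, $z\in V_k$. None of these lies in $V_0$, and since intra-part edges were deleted, $i,j,k$ are distinct; since the edges of irregular or sparse pairs were deleted, each of $(V_i,V_j)$, $(V_i,V_k)$, $(V_j,V_k)$ is an $\eps'$-regular pair of density at least $\eta$ in $G$. The triangle counting lemma for regular triples then produces at least $(1-2\eps')(\eta-\eps')^3 N^3$ triangles of $G$ with one vertex in each of $V_i,V_j,V_k$; since $\eps'<\eta/2$ and $N\ge n/(2M)$ with $M\le M_0(\eps')$ depending only on $\eps$, this is at least $\delta n^3$ for some $\delta=\delta(\eps)>0$. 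Fixing this $\delta$, the contrapositive is precisely the lemma: if $G$ has at most $\delta n^3$ triangles then $G'$ is triangle-free, and $G'$ arose from $G$ by deleting fewer than $\eps n^2$ edges.

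Beyond treating the regularity lemma itself as a black box, the only substantive ingredient is the triangle counting lemma for a regular triple, which follows by two applications of the regularity condition (first restricting to a large subset of $V_i$ whose vertices have close-to-expected degree into both $V_j$ and $V_k$, then counting edges of $(V_j,V_k)$ inside the corresponding neighbourhoods). I expect the main difficulty to be purely bookkeeping: choosing $\eps'$ and $\eta$ so that every error term stays below $\eps$, and checking that the final $\delta$ depends on $\eps$ alone and not on $n$.
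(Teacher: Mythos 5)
Your argument is correct: it is the standard ``clean and count'' derivation of the triangle removal lemma from Szemer\'edi's regularity lemma, and your bookkeeping goes through (with $\eta=\eps/8$, $\eps'<\eps/8$ and $M\ge 8/\eps$ the four deletion terms indeed sum to less than $\eps n^2$, and the counting lemma applied to any surviving triangle produces at least $(1-2\eps')(\eta-\eps')^3(n/2M)^3$ triangles of $G$, which is $\delta n^3$ for a $\delta$ depending only on $\eps$). Note, however, that there is no proof in the paper to compare against: Theorem~\ref{trl} is invoked as a black box, attributed to Ruzsa and Szemer\'edi, and used only to lower-bound the number of triangles in the underlying graph $U(G)$ in Lemma~\ref{edges}; your write-up is essentially the proof found in the cited literature rather than an alternative to anything internal to the paper. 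The only loose ends are routine: decrease $\delta$ slightly so that the triangle count strictly exceeds $\delta n^3$ (the hypothesis permits exactly $\delta n^3$ triangles), and also take $\delta$ small enough that for the bounded range of $n$ where the regularity lemma does not yet apply the hypothesis of at most $\delta n^3$ triangles forces $G$ to be triangle-free, so no edges need be removed; with those two adjustments the statement holds for all $n$ with $\delta=\delta(\eps)$ independent of $n$, as required.
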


\begin{lemma}\label{edges}
For any $\eps>0$ there is $\delta>0$ such that for any linear $r$-graph $G$ on $n$ vertices with $r \ge 3$, $n \ge r/\delta$ and $d_{\lin}(G)>\eps$ we have $e(B(G)) \ge \delta n^3$.
\end{lemma}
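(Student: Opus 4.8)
The natural object to feed into the Triangle Removal Lemma is the underlying graph $U(G)$, not $B(G)$ itself: $B(G)$ has up to $r^{-2}n^3$ vertices, so the quadratic error term from the removal lemma applied to $B(G)$ would be of order $n^6$ and hence useless. So the first step is to translate $e(B(G))$ into a count inside $U(G)$. By construction the edge set of $B(G)$ is the disjoint union of the triangles associated to ``loose triples'' $\{e,f,g\}$ of edges of $G$, meaning triples with $|e\cap f| = |e\cap g| = |f\cap g| = 1$ and $e\cap f\cap g = \emptyset$; indeed two distinct loose triples determine edge-disjoint triangles in $B(G)$, since sharing an edge of $B(G)$ would mean sharing two of the three bow-ties they determine, which forces the two triples to coincide. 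Hence $e(B(G)) = 3T$ where $T$ is the number of loose triples, and it suffices to show $T \ge \delta n^3$ after adjusting constants.

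The second step is to identify the triangles of $U(G)$. Since $G$ is linear, every edge $\{x,y\}$ of $U(G)$ lies in a unique edge of $G$; so for a triangle $xyz$ of $U(G)$ we obtain edges $e_{xy}, e_{yz}, e_{zx}$ of $G$ containing the respective pairs, and linearity forces these three edges to be either all equal or all distinct. In the first case $\{x,y,z\}$ lies inside a single edge of $G$, and there are precisely $e(G)\binom r3$ such ``degenerate'' triangles. In the second case a direct check (the pairwise intersections are forced to be the singletons $\{x\},\{y\},\{z\}$, with empty common intersection because $x,y,z$ are distinct) shows $\{e_{xy},e_{yz},e_{zx}\}$ is a loose triple with core $\{x,y,z\}$, and distinct loose triples have distinct cores. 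Therefore the number of triangles of $U(G)$ is at most $e(G)\binom r3 + T$.

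Now suppose for contradiction that $T < \delta n^3$ for a small $\delta = \delta(\eps)$ to be chosen. Using $e(G)\binom r2 \le \binom n2$ (linearity) together with $r \le \delta n$ (from $n \ge r/\delta$), the degenerate count satisfies $e(G)\binom r3 = e(G)\binom r2\cdot\tfrac{r-2}{3} \le \binom n2\cdot\tfrac r3 \le \tfrac{\delta}{6}\,n^3$, so $U(G)$ has fewer than $2\delta n^3$ triangles. If $\delta$ is chosen small enough in terms of $\eps$ — concretely, apply the Triangle Removal Lemma with parameter $\eps^*:=\eps/10$ to obtain a threshold $\delta^*$, and take $\delta \le \delta^*/2$ — then the Triangle Removal Lemma produces a set of at most $\tfrac{\eps}{10}n^2$ edges whose removal makes $U(G)$ triangle-free.

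This is contradicted by the rigid structure of $U(G)$: by linearity it is the edge-disjoint union of the $e(G)$ cliques $K_r$ corresponding to the edges of $G$, and by Mantel's theorem each such $K_r$ requires at least $\binom r2 - \lfloor r^2/4\rfloor \ge \tfrac{r(r-2)}{4}$ of its own edges to be removed before it becomes triangle-free. Hence any set of edges whose removal makes $U(G)$ triangle-free has size at least $e(G)\cdot\tfrac{r(r-2)}{4} = d_{\lin}(G)\binom n2\cdot\tfrac{r-2}{2(r-1)} \ge \tfrac{\eps}{4}\binom n2 > \tfrac{\eps}{10}n^2$, using $r\ge3$, $d_{\lin}(G)>\eps$ and $n$ large (which holds as $n\ge r/\delta$), contradicting the previous step. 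Thus $T \ge \delta n^3$ and so $e(B(G)) = 3T \ge \delta n^3$ after renaming $\delta$. I expect the only real subtlety to be choosing the auxiliary graph correctly: $U(G)$ is dense and expensive to make triangle-free purely because of its clique structure, yet it has few triangles whenever $r\ll n$; the hypothesis $n \ge r/\delta$ enters precisely to make the degenerate-triangle term $e(G)\binom r3$ negligible, and the rest is constant-chasing.
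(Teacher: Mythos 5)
Your proposal is correct and takes essentially the same route as the paper: apply the Triangle Removal Lemma to the underlying graph $U(G)$, use the edge-disjoint $K_r$-decomposition to show that $U(G)$ cannot be made triangle-free by removing $o(n^2)$ edges (the paper asserts a one-third fraction per clique where you invoke Mantel, but this is the same point), discount the $e(G)\binom{r}{3}$ degenerate triangles lying inside a single edge of $G$ using $n\ge r/\delta$, and identify the remaining triangles with edge-disjoint triangles of $B(G)$. Your bookkeeping via loose triples and the identity $e(B(G))=3T$ is just a more explicit rendering of the paper's final sentence, so there is nothing further to add.
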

\begin{proof}
We bound the number of triangles in the underlying graph $U(G)$. 
Recall that the edges of $G$ correspond to edge-disjoint copies of $K_r$ in $U(G)$, so $e(U(G)) \ge \eps \tbinom{n}{2}$. 
We must remove at least one third of the edges of $U(G)$ to make it triangle-free, as this is true for each copy of $K_r$. 
Thus by Theorem \ref{trl} there is $\delta>0$ depending on $\eps$ but not on $r$ such that $U(G)$ has at least $\delta n^3$ triangles.
The number of such triangles contained within some edge of $G$ is
$\tbinom{r}{3} e(G) < rn^2/6 < \delta n^3/2$.
The other triangles determine edge-disjoint triangles in $B(G)$. 
\end{proof}

We say that a component $\mathcal{C}$ of the bow-tie graph $B(G)$ is \emph{dense} if it has average degree greater than $12r$. 
We conclude this subsection by presenting a lemma showing that $B(G)$ contains a large component or many dense components.

\begin{lemma}\label{dichotomy}
For any $\eps>0$ there is $r_0=r_0(\eps)$ such that for any $r \ge r_0$ and $k \ge 2$ there is $n_0=n_0(r,k)$ such that for any linear $r$-graph $G$ on $n \ge n_0$ vertices with $d_{\lin}(G) \ge \eps$,  
in $B(G)$ we have a component with at least $k^2$ vertices 
or at least $(\eps/2rk^2)^2 n^3$ dense components.
\end{lemma}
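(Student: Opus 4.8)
The plan is to combine the two previous lemmas with a counting argument that forces the alternative. Fix $\eps > 0$ and choose $\delta = \delta(\eps)$ from Lemma~\ref{edges}, so that for $n$ large (given $r$) we have $e(B(G)) \ge \delta n^3$. Suppose, for contradiction, that $B(G)$ has no component with at least $k^2$ vertices and fewer than $(\eps/2rk^2)^2 n^3$ dense components. Write the components of $B(G)$ as $\mathcal{C}_1, \dots, \mathcal{C}_m$, split into dense ones (average degree $> 12r$) and sparse ones (average degree $\le 12r$). The strategy is to show that sparse components contribute too few edges, and that there are too few dense components to make up the deficit, contradicting $e(B(G)) \ge \delta n^3$.

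First I would bound the contribution of the sparse components. Each sparse component $\mathcal{C}_i$ has $v(\mathcal{C}_i) < k^2$ vertices (by the first-alternative assumption) and average degree at most $12r$, so $e(\mathcal{C}_i) \le 6r\, v(\mathcal{C}_i)$. Summing over all sparse components, their total edge count is at most $6r \sum_i v(\mathcal{C}_i) \le 6r\, v(B(G)) \le 6r \cdot r^{-2} n^3 = 6 n^3 / r$, using the upper bound $v(B(G)) \le r^{-2} n^3$ from Lemma~\ref{vertices}. Choosing $r_0 = r_0(\eps)$ with $6/r_0 < \delta/2$, the sparse components together contribute at most $\delta n^3 / 2$ edges once $r \ge r_0$.

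Next I would bound the contribution of the dense components. Each component of $B(G)$, dense or not, has at most $v(B(G)) \le r^{-2} n^3 \le n^3$ vertices, hence at most $\binom{n^3}{2} \le n^6$ edges — but a sharper and cleaner bound is needed, so instead I would use $v(\mathcal{C}_i) \le v(B(G)) \le r^{-2}n^3$ and bound $e(\mathcal{C}_i) \le \binom{v(\mathcal{C}_i)}{2} \le \tfrac12 (r^{-2}n^3)^2$; this is wasteful, so the better route is to observe each dense component, like any component, has at most $v(B(G))$ vertices and therefore at most $v(B(G))^2$ edges, but what actually matters is a trivial bound times the number of dense components. Concretely: if there are fewer than $(\eps/2rk^2)^2 n^3$ dense components, and each has at most $v(B(G)) \le r^{-2}n^3$ vertices and hence fewer than $(r^{-2}n^3)^2/2$ edges, the product is far too large. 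This is the step I expect to be the main obstacle, since the naive per-component edge bound $v(B(G))^2$ is enormous; the fix is to bound the \emph{total} number of edges in dense components directly by noting each edge of $B(G)$ lies in a triangle spanning three bow-ties, all in the same component, so dense components collectively contain at most $e(B(G))$ edges, but we need their count, not just their edge total — so the correct pairing is: total edges $= (\text{edges in dense comps}) + (\text{edges in sparse comps})$, the second is $\le \delta n^3/2$, so edges in dense components $\ge \delta n^3/2$; then since the total number of vertices of $B(G)$ across all dense components is at most $r^{-2}n^3$, and each dense component has at least, say, one edge while also the number of dense components is bounded by the number of vertices they can hold, we get a bound of the form: number of dense components $\le v(B(G)) \le r^{-2}n^3$, which comfortably exceeds $(\eps/2rk^2)^2 n^3$ for large $r$ — wait, that is the wrong direction. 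The clean argument is: each dense component has average degree $>12r \ge 12$, hence at least, say, $7$ vertices, so the number of dense components is at most $v(B(G))/7$; but to get \emph{many} dense components we argue from edges: dense components hold $\ge \delta n^3/2$ edges in total, while each individual component holds at most $v(B(G)) \le r^{-2}n^3$ vertices and thus at most $r^{-2}n^3 \cdot v(B(G)) \le r^{-4}n^6$ edges — no. I would instead simply bound $e(\mathcal{C}) \le v(\mathcal{C})^2$ for each component and note $\sum v(\mathcal{C}) \le v(B(G)) \le r^{-2}n^3 =: V$; if there are $N$ dense components then $\sum_{\text{dense}} e(\mathcal{C}) \le \sum_{\text{dense}} v(\mathcal{C})^2 \le V \cdot \max v(\mathcal{C}) \le V^2$, which only shows $N \ge 1$. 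The honest resolution, and the real content of the lemma, is that one must instead bound each dense component's edges by a quantity \emph{linear} in its vertex count times $v(B(G))$-free quantities — but a dense component can be dense precisely because it is small; so we bound $e(\mathcal{C}) \le \binom{v(\mathcal{C})}{2}$ and use that the \emph{number} of dense components times their minimum size is at most $V$: since a dense component needs more than $12r$ average degree it has more than $12r$ vertices, wait no — average degree $>12r$ forces $v(\mathcal{C}) > 12r$. Hmm, that gives $N \le V/(12r) = n^3/(12 r^3)$, still an upper bound.

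I will therefore restructure: the point is not to bound the number of dense components from above but to note that each dense component contributes at most $\binom{v(\mathcal C)}{2} < v(\mathcal C)\cdot v(B(G))$ edges, so $\sum_{\text{dense}} e(\mathcal C) < v(B(G)) \sum_{\text{dense}} v(\mathcal C) \le v(B(G))^2 \le r^{-4} n^6$, which is useless; the genuinely correct move (as in Shapira--Tyomkyn) is to bound $e(\mathcal C) \le \binom{v(\mathcal C)}{2} \le \binom{v(B(G))}{2}$ and divide: $N \ge \bigl(\sum_{\text{dense}} e(\mathcal C)\bigr) / \binom{v(B(G))}{2} \ge (\delta n^3/2)/\bigl(\tfrac12 (r^{-2}n^3)^2\bigr) = \delta r^4 / n^3$, which again goes the wrong way. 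The actual correct bound uses that each component, being a component of $B(G)$, satisfies $v(\mathcal C) \le v(B(G)) \le r^{-2}n^3$, hence $e(\mathcal C) \le v(\mathcal C) \cdot v(B(G)) / 2$; summing, $\sum e(\mathcal C) \le v(B(G))^2/2$, and since $e(B(G)) \ge \delta n^3$ while $v(B(G))^2/2 \le n^6/(2r^4)$ this is consistent and unhelpful. I acknowledge this is the crux I have not cleanly resolved in sketch form: the right argument bounds the edges \emph{per component} by $\binom{v(\mathcal C)}{2}$, notes $\sum_{\text{dense}} v(\mathcal C) \le v(B(G)) \le r^{-2}n^3$, and then uses convexity together with the fact that $N$ dense components sharing at most $r^{-2}n^3$ vertices have $\sum e(\mathcal C) \le N \binom{r^{-2}n^3/N \cdot \text{(something)}}{2}$ — the clean version is $\sum_{\text{dense}} e(\mathcal C_i) \le \tfrac12 \bigl(\max_i v(\mathcal C_i)\bigr) \sum_i v(\mathcal C_i) \le \tfrac12 \cdot r^{-2}n^3 \cdot r^{-2}n^3$, still wrong. \emph{Correct plan:} bound $e(\mathcal C_i) \le \binom{v(B(G))}{2} \le v(B(G))^2$ is too weak; instead use $e(\mathcal C_i) < v(\mathcal C_i)^2$ and $\sum_i v(\mathcal C_i)^2 \le (\sum_i v(\mathcal C_i))^2 \le v(B(G))^2 \le r^{-4}n^6$; then $N \ge (\delta n^3/2)/v(B(G)) \ge (\delta n^3/2)/(r^{-2}n^3) = \delta r^2/2$ — constant, wrong. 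The honest fix, which I would write carefully in the real proof: each dense component has at most $v(B(G)) \le r^{-2}n^3$ vertices, so at most $\binom{r^{-2}n^3}{2} < \tfrac12 r^{-4}n^6$ edges; therefore $N \ge (\delta n^3/2)/(\tfrac12 r^{-4}n^6) = \delta r^4/n^3 \to 0$. This confirms the naive bounds are all too weak and the \textbf{main obstacle} is genuinely finding the right per-component edge bound; the resolution in the paper must exploit that a component with $< k^2$ vertices has $< \binom{k^2}{2} < k^4$ edges, so if \emph{all} components were sparse-or-small we would need $N_{\text{all}} \ge \delta n^3/k^4$ components, of which at most $v(B(G))/1 \le r^{-2}n^3$ are anything — and among these the dense ones number $\ge (\delta n^3/k^4) - (\text{contribution bound})$, giving $\ge (\eps/2rk^2)^2 n^3$ after choosing $r$ large. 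In summary: assume no big component; then every component has $< k^2$ vertices, hence $< k^4$ edges, so $N \ge e(B(G))/k^4 \ge \delta n^3/k^4$ components; the sparse ones among them contribute $\le 6n^3/r \le \delta n^3/2$ edges (choosing $r_0$ so $12/r_0 < \delta$), so dense components contribute $\ge \delta n^3/2$ edges, hence number at least $(\delta n^3/2)/k^4 \ge (\eps/2rk^2)^2 n^3$ for $r \ge r_0(\eps)$ suitably large and $n \ge n_0(r,k)$, completing the proof.
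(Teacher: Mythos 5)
After many acknowledged false starts, your closing summary is a correct proof: sparse components contribute at most $6r\, v(B(G)) \le 6n^3/r \le \delta n^3/2$ edges, so dense components carry at least $\delta n^3/2$ edges, and since (absent a component with $k^2$ vertices) every component has fewer than $k^2$ vertices and hence fewer than $k^4$ edges, there are at least $\delta n^3/(2k^4) \ge (\eps/2rk^2)^2 n^3$ dense components once $r \ge r_0(\eps)$. This is essentially the paper's argument --- both rest on Lemmas~\ref{vertices} and~\ref{edges} and the dense/sparse dichotomy --- differing only in bookkeeping: the paper bounds the proportion of vertices lying in dense components via an average-degree contradiction, while you count edges in dense components directly.
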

\begin{proof}
We may assume $n$ is large enough to apply Lemmas~\ref{vertices} and~\ref{edges}, so $B(G)$ has at least $(\eps/2r)^2 n^3$ vertices and average degree at least $\delta(\eps)r^2$. We may also assume that $B(G)$ has no component with at least $k^2$ vertices. To prove the lemma, it now suffices to show that a proportion at least $k^{-2}$ of the vertices of $B(G)$ belong to dense components. If this were false then, as the average degree in dense components is at most $k^2$ and in non-dense components at most $12r$, the average degree in $B(G)$ would be at most $12r + k^{-2} k^2 = 13r$. However, for $r>r_0(\eps)$ large we have $13r<\delta(\eps)r^2$, which is a contradiction. 
\end{proof}

\subsection{Large component}

Given a subset $\mathcal{C}$ of the vertices of $B(G)$, we write $G(\mathcal{C})$ for the subgraph of $G$ obtained by taking all edges belonging to a bow-tie in $\mathcal{C}$. In this subsection we establish the following lemma showing that we can find the required configuration in $G$ whenever $B(G)$ has a component $\mathcal{C}$ such that $G(\mathcal{C})$ has many edges.

\begin{lemma}\label{largeCtool}
	Let $G$ be a linear $r$-graph. Suppose $B(G)$ contains a component $\mathcal{C}$ such that $G(\mathcal{C})$ contains at least $k$ edges. Then $G$ contains an $((r-2)k+3,k)$-configuration.
\end{lemma}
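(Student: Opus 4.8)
The plan is to build the desired configuration greedily by adding edges of $G(\mathcal{C})$ one at a time, walking along a spanning tree of the component $\mathcal{C}$ in $B(G)$. The key bookkeeping observation is this: if $H$ is a sub-$r$-graph of $G$ that is \emph{connected} in the sense that its corresponding bow-ties lie in one component of $B(H)$ (equivalently, the edges of $H$ can be ordered so that each new edge meets an earlier one in exactly one vertex via a bow-tie already present), then $H$ spans at most $(r-2)|H| + 3$ vertices once $|H|$ is large enough. Actually the cleanest invariant to maintain is slightly different: I would keep track of a set $S$ of edges together with a witnessing structure, and show by induction on $|S|$ that adding one more edge from a bow-tie that overlaps $S$ increases the vertex count by at most $r-2$. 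The first step, then, is to set up this induction and isolate the precise connectivity notion that the component $\mathcal{C}$ provides.

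Concretely, I would argue as follows. Pick any bow-tie $(e_1,e_2)$ in $\mathcal{C}$; the two edges $e_1,e_2$ span $2r-1$ vertices, which equals $(r-2)\cdot 2 + 3$, so the base case holds. Now suppose we have collected edges $e_1,\dots,e_j$ (for some $2 \le j < k$) spanning at most $(r-2)j+3$ vertices, and suppose the bow-ties among these edges form a connected subgraph of $B(G)$ that is still ``inside'' $\mathcal{C}$. Since $\mathcal{C}$ is connected and $G(\mathcal{C})$ has at least $k > j$ edges, there must be a bow-tie $(f,f')$ in $\mathcal{C}$ with, say, $f \in \{e_1,\dots,e_j\}$ and $f' \notin \{e_1,\dots,e_j\}$ — this is where I would need to check carefully that connectivity of $\mathcal{C}$ in $B(G)$, together with $|G(\mathcal{C})| \ge k$, forces the existence of such a ``boundary'' bow-tie rather than merely a boundary edge. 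Setting $e_{j+1} = f'$, the new edge $e_{j+1}$ meets $f = e_i$ in exactly one vertex (by linearity and the definition of a bow-tie), so $|e_1 \cup \dots \cup e_{j+1}| \le |e_1 \cup \dots \cup e_j| + (r-1) \le (r-2)j + 3 + (r-1)$. That gives $(r-2)(j+1) + 2$, which is even better than needed, but I should be a little more careful: I want the new bow-tie $(e_i, e_{j+1})$ itself to be present, and I should make sure the augmented set of bow-ties stays connected in $B(G)$ — it does, since $(e_i,e_{j+1})$ is adjacent (in $B(G)$ via the triangle $(e_i,e_{j+1}),(e_i,e_\ell),(e_{j+1},e_\ell)$, if such an $e_\ell$ exists, or simply by lying in $\mathcal{C}$) to the existing structure. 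After $k-2$ such steps we have $k$ edges spanning at most $(r-2)k+3$ vertices, i.e.\ an $((r-2)k+3,k)$-configuration.

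The step I expect to be the main obstacle is the combinatorial heart of the induction: verifying that a component $\mathcal{C}$ of $B(G)$ with $|G(\mathcal{C})| \ge k$ really does let us grow a connected set of bow-ties covering $k$ distinct edges of $G$, one new edge at a time. The subtlety is that adjacency in $B(G)$ is defined only through triangles (triples of pairwise-intersecting edges with empty common intersection), so moving from one bow-tie to a neighbouring one changes the underlying pair of edges in a constrained way, and I must confirm that following a path in $\mathcal{C}$ can be arranged to reveal new edges of $G$ at a controlled rate — in particular that we never get ``stuck'' cycling among a bounded set of edges before reaching $k$ of them. I would handle this by taking a spanning tree of $\mathcal{C}$, rooting it, and arguing that along any root-to-leaf path the set of edges appearing in the bow-ties is non-decreasing and that, since altogether $k$ distinct edges appear, some breadth-first exploration of the tree encounters the $k$-th new edge after finitely many steps; each ``new edge'' event is exactly an augmentation step above. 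A secondary point to get right is the constant in the final count — tracking whether we truly land at $(r-2)k+3$ and not $(r-2)k+2$ or worse — but since each augmentation costs at most $r-1 \le (r-2)+1$ new vertices and the base case already ``spends'' the $+3$, the arithmetic is comfortably within the required bound.
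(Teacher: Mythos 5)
Your augmentation step is not strong enough, and the arithmetic that makes it look sufficient is wrong. If the new edge $e_{j+1}=f'$ is only guaranteed to meet the current configuration in the single vertex $f'\cap f$, then it can contribute up to $r-1$ new vertices, and $(r-2)j+3+(r-1)=(r-2)(j+1)+4$, not $(r-2)(j+1)+2$ as you claim. So the invariant ``at most $(r-2)j+3$ vertices'' is not maintained: after $k-2$ such steps from the bow-tie base case you only get $2r-1+(k-2)(r-1)=(r-1)k+1=(r-2)k+3+(k-2)$ vertices, i.e.\ an $((r-1)k+1,k)$-configuration, which misses the target by $k-2$ vertices. Your closing remark that ``each augmentation costs at most $r-1\le (r-2)+1$ new vertices and the base case already spends the $+3$'' is exactly where the loss accumulates, one vertex per step.

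The missing idea is the one the paper exploits: edges of $B(G)$ exist only as parts of triangles, and this forces each newly added edge of $G$ to meet the current configuration in \emph{two} distinct vertices, not one. Concretely, if a bow-tie $b=(e,f)$ outside the current structure is adjacent (in $B(G)$) to a bow-tie already in it, then the witnessing triangle supplies edges $e,g$ already in the configuration with $|f\cap e|=|f\cap g|=1$ and $e\cap f\cap g=\emptyset$, so $f$ covers two distinct old vertices and adds at most $r-2$ new ones. With this, each step adds exactly one new edge and at most $r-2$ new vertices, and starting from a triangle of $B(G)$ (a $(3(r-2)+3,3)$-configuration) — or equivalently from your single bow-tie, which spans $2(r-2)+3$ vertices — the count $(r-2)\cdot(\text{number of edges})+3$ is preserved, giving the $((r-2)k+3,k)$-configuration. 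You actually mention the triangle structure in passing (``if such an $e_\ell$ exists''), but it is not optional: you must select the new bow-tie via an edge of $B(G)$ into the current structure precisely so that such a third edge is guaranteed, and then use both intersection points in the vertex count. The connectivity worry you flag (getting ``stuck'' cycling among old edges) is comparatively harmless: adjacent bow-ties share an edge of $G$, so any bow-tie of $\mathcal{C}$ adjacent to the current structure either has both edges already collected (absorb it at no cost) or yields exactly one new edge, and since $G(\mathcal{C})$ has at least $k$ edges the process reaches $k$ edges.
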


\begin{proof}
We may assume $k\ge 3$, since $G(\mathcal{C})$ is a union of triangles and the case $k<3$ is trivial.

We grow the required configuration in $G$ inductively as follows. We start by fixing any triangle ${\cal C}_1$ in ${\cal C}$. We note that $G(\mathcal{C}_1)$ is a $(3(r-2)+3,3)$-configuration in $G$.

Given an induced proper subgraph ${\cal C}_i$ of ${\cal C}$ with $i \ge 1$, to obtain $\mathcal{C}_{i+1}$ we begin by selecting a vertex $b_i\in \mathcal{C}\setminus \mathcal{C}_i$ which belongs to the neighbourhood of a vertex in $\mathcal{C}_i$.  Suppose that $b_i$ represents the bow-tie $(e_i,f_i)$. Since $b_i$ belongs to the neighbourhood of $\mathcal{C}_i$, one of $e_i$ or $f_i$, say $e_i$, must belong to $G(\mathcal{C}_i)$. To obtain $\mathcal{C}_{i+1}$ we add to $\mathcal{C}_i$ all vertices of ${\cal C}$ corresponding to a bow-tie containing $f_i$ and another edge $e\in G(\mathcal{C}_i)$; in particular $b_i$ is such a vertex.

Note that at each step we introduce precisely one new edge $f_i$ into $G(\mathcal{C}_i)$, so the number $E_i$ of edges in $G(\mathcal{C}_i)$ is equal to $i+2$. Moreover, we introduce at most $r-2$ new vertices, as $f_i$ contains $2$ vertices in $e_i\cup e\subset G(\mathcal{C}_i)$ by choice of $b_i$ (see Figure~\ref{fig2}). Thus the number of vertices of $G$ spanned by $G(\mathcal{C}_i)$ is at most $(r-2)(i+2)+3=(r-2)E_i+3$. As $e(G(\mathcal{C}))\ge k$ we reach the $((r-2)k+3,k)$-configuration $G(\mathcal{C}_{k-2})$.
\end{proof}

It will be helpful to deduce the lower bound on the number of edges of $G(\mathcal{C})$ required by Lemma~\ref{largeCtool} from a lower bound on the number of vertices of $\mathcal{C}$. Indeed, as vertices of $B(G)$ correspond to pairs of edges of $G(\mathcal{C})$, if $B(G)$ contains a component $\mathcal{C}$ with at least $k^2$ vertices then $G(\mathcal{C})$ contains at least $k$ edges, so we may apply Lemma~\ref{largeCtool}.

\begin{corollary}\label{largeC}
	Let $G$ be a linear $r$-graph such that $B(G)$ contains a component $\mathcal{C}$ with at least $k^2$ vertices. Then $G$ contains an $((r-2)k+3,k)$-configuration.
\end{corollary}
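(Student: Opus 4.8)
The statement to prove is Corollary~\ref{largeC}: if $B(G)$ has a component $\mathcal{C}$ with at least $k^2$ vertices, then $G$ contains an $((r-2)k+3,k)$-configuration. The plan is to reduce this to Lemma~\ref{largeCtool}, which already gives the desired configuration under the hypothesis that $G(\mathcal{C})$ contains at least $k$ edges. So the only thing that needs to be done is to convert the lower bound of $k^2$ on the number of \emph{vertices} of $\mathcal{C}$ into a lower bound of $k$ on the number of \emph{edges} of $G(\mathcal{C})$.

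First I would recall that every vertex of $B(G)$ is by definition an unordered pair $(e,f)$ of distinct edges of $G$ with $|e \cap f| = 1$, and that the vertices lying in the component $\mathcal{C}$ are (by the definition of $G(\mathcal{C})$ in the paragraph preceding Lemma~\ref{largeCtool}) pairs of edges drawn from the edge set of $G(\mathcal{C})$. Hence the map sending a bow-tie $(e,f) \in \mathcal{C}$ to the unordered pair $\{e,f\}$ is an injection from the vertex set of $\mathcal{C}$ into the set of $2$-element subsets of $E(G(\mathcal{C}))$. Therefore
\[
k^2 \le v(\mathcal{C}) \le \binom{e(G(\mathcal{C}))}{2} < \frac{e(G(\mathcal{C}))^2}{2},
\]
which forces $e(G(\mathcal{C})) > k\sqrt{2} > k$, and in particular $e(G(\mathcal{C})) \ge k$ since the number of edges is an integer. (One could even be slightly more careful and note $\binom{m}{2} \ge k^2$ already gives $m \ge k+1$ for $k \ge 1$, but $e(G(\mathcal{C})) \ge k$ is all we need.)

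With that bound in hand, I would simply invoke Lemma~\ref{largeCtool} applied to the same component $\mathcal{C}$: since $G(\mathcal{C})$ contains at least $k$ edges, $G$ contains an $((r-2)k+3,k)$-configuration, which is exactly the conclusion of the corollary. There is essentially no obstacle here — the content is entirely in Lemma~\ref{largeCtool} and in the bookkeeping that pairs of bow-ties in $\mathcal{C}$ are pairs of edges of $G(\mathcal{C})$ — so the "hard part" is really just making sure the injectivity of the pair-to-set map is stated cleanly; this is immediate from the definitions since a bow-tie is itself an unordered pair of edges. I would present this as a two- or three-line deduction.
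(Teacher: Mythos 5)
Your proposal is correct and matches the paper's argument: the paper likewise observes that vertices of $\mathcal{C}$ are pairs of edges of $G(\mathcal{C})$, so $k^2$ vertices force at least $k$ edges, and then invokes Lemma~\ref{largeCtool}. Your counting $k^2 \le \binom{e(G(\mathcal{C}))}{2}$ is just a slightly more explicit write-up of the same one-line deduction.
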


\subsection{Dense components}

In this subsection we prove the following lemma showing that dense components provide configurations using so few vertices that we can combine several of them in proving Theorem~\ref{densityTS}.
	
\begin{lemma}\label{denseCs}
Let $\mathcal{C}$ be a dense component of $B(G)$. Then $G(\mathcal{C})$ is an $((r-2)u,u)$-configuration, where $u=e(G(\mathcal{C}))$.
\end{lemma}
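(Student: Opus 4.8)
The plan is to exploit the fact that a dense component of the bow-tie graph is highly connected, so that from any starting triangle we can grow a spanning subconfiguration where \emph{every} new edge is added more efficiently than in the worst case of Lemma~\ref{largeCtool} — namely, contributing at most $r-2$ new vertices each, but with the base case also costing only $r-2$ per edge rather than leaving a $+3$ overhead. Concretely, we want to show $G(\mathcal{C})$ spans at most $(r-2)u$ vertices where $u = e(G(\mathcal{C}))$, which is exactly the count one gets if every edge of $G(\mathcal{C})$ after the first meets the previously-built part in two vertices, and additionally the initial triangle spans $3(r-2)$ rather than $3(r-2)+3$ vertices. So the first step is to observe that any triangle in $B(G)$ corresponds to three edges $e_1,e_2,e_3$ of $G$ pairwise intersecting in one vertex with empty common intersection, and crucially these three intersection points are \emph{distinct} (if two coincided, all three edges would share that vertex, contradicting $e_1\cap e_2\cap e_3=\emptyset$); hence the three edges span $3r - 3 = 3(r-2) + 3$\ldots

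Wait — that gives $3(r-2)+3$, the same as before. The gain must come from averaging over many edges, not from the base case. So the real plan is: run the same growth process as in Lemma~\ref{largeCtool}, starting from a triangle $\mathcal{C}_1$ with $G(\mathcal{C}_1)$ spanning $3(r-2)+3$ vertices and $E_1 = 3$ edges, and at each subsequent step add one new edge $f_i$ spanning at most $r-2$ new vertices, exhausting all of $\mathcal{C}$. This yields that $G(\mathcal{C})$ spans at most $(r-2)(u-3) + 3(r-2)+3 = (r-2)u + 3$ vertices — still short by $3$. To remove the extra $3$, I would use density: since the component is dense (average degree $> 12r$, in particular well beyond a tree), there is a bow-tie structure forcing two of the ``new vertex'' budgets to overlap, or equivalently some edge $f_i$ that is added spans at most $r-3$ new vertices because it meets $G(\mathcal{C}_i)$ in three vertices rather than two. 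The key mechanism: a dense component contains a bow-tie $(e,f)$ such that both $e$ and $f$ already lie in $G(\mathcal{C}_i)$ at the moment we would add it (a ``closing'' bow-tie, the analogue of a cycle rather than a tree edge). When that happens, adding the triangle through that bow-tie introduces \emph{no} new edge at all for the step where it closes, so we save; more carefully, because $e(\mathcal{C}) > $ (number of vertices of $\mathcal{C}$ minus $1$) once the component is dense, the growth process must at some point add a bow-tie $b_i = (e_i,f_i)$ with $f_i$ \emph{already} in $G(\mathcal{C}_i)$, at which point $E_i$ does not increase while $v(G(\mathcal{C}_i))$ does not increase either — so the final edge count $u$ is reached while having ``wasted'' at least $3$ units of vertex budget relative to the naive bound, giving $v(G(\mathcal{C})) \le (r-2)u$.

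The main obstacle, and the step I would spend the most care on, is making the ``dense forces a closing bow-tie during growth'' argument precise: one must track the process so that the moment a redundant bow-tie is absorbed is correctly accounted, and argue that absorbing it costs at most $r-3$ new vertices for its associated new edge (or zero new edges). The cleanest route is probably not to re-run the process but to argue directly: pick a spanning connected subgraph of $\mathcal{C}$ in $B(G)$ realizing $G(\mathcal{C})$, note $v(\mathcal{C}) \ge u$ would contradict density-forced abundance of edges among bow-ties, hence $v(\mathcal{C})$ and the triangle-structure of $B[\mathcal{C}]$ together with linearity of $G$ pin down $v(G(\mathcal{C}))$ via $v(G(\mathcal{C})) = r u - (\text{number of intersection points, with multiplicity corrections})$, and then a short extremal count of intersection points using that $\mathcal{C}$ is connected with $u$ edges of $G$ gives at least $u$ such points, hence $v(G(\mathcal{C})) \le ru - u = (r-1)u \le (r-2)u$ once $u \ge $ something — no, that is too lossy. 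I will instead commit to the inductive growth argument above, with the refinement that whenever $b_i$ could be chosen with \emph{both} its edges in $G(\mathcal{C}_i)$ we must do so, and only when forced do we introduce a genuinely new edge $f_i$ costing $\le r-2$ new vertices; since $\mathcal{C}$ is connected and dense, there are strictly more bow-ties in $\mathcal{C}$ than $u$, which forces at least one redundant absorption, and one checks this redundant step is exactly what converts the $+3$ into a saving, yielding the claimed $((r-2)u, u)$-configuration.
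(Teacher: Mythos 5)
There is a genuine gap: the mechanism you propose for removing the $+3$ does not work. A ``closing'' bow-tie, i.e.\ a vertex of $\mathcal{C}$ both of whose edges already lie in $G(\mathcal{C}_i)$, costs nothing but also \emph{saves} nothing: the vertex count of $G(\mathcal{C})$ depends only on which edges of $G$ appear, not on how many bow-ties of $\mathcal{C}$ you have absorbed, so absorbing a redundant bow-tie leaves both $e(G(\mathcal{C}_i))$ and the spanned vertex set unchanged and the bound stays $(r-2)u+3$. (Your ``$e(\mathcal{C})>v(\mathcal{C})-1$'' observation is also vacuous here: $B(G)$ is a union of triangles, so every nontrivial component has cycles, dense or not, and density means average degree $>12r$, far more than the existence of one extra edge.) The only way to beat $(r-2)u+3$ along this growth process is the one you mention in passing and then abandon: some newly added edge $f_i$ of $G$ must meet the current vertex set $V(G(\mathcal{C}_i))$ in strictly more than $2$ vertices --- and since you must save $3$ in total, you need either one step with $|f_i\cap V(G(\mathcal{C}_i))|\ge 5$ or an accumulation of smaller savings; a single step with intersection $3$ saves only $1$.

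What is missing is precisely the heart of the paper's proof: a counting argument showing that density of $\mathcal{C}$ \emph{forces} such a step. The paper argues by contradiction: if at every step $|f_i\cap V(G(\mathcal{C}_i))|\le 4$, then for each new bow-tie $v=(e_v,f_i)$ every neighbour of $v$ in $\mathcal{C}_i$ corresponds to an edge $e_w$ of $G(\mathcal{C}_i)$ that, by linearity of $G$, is determined by the pair of points $e_w\cap e_v$ (at most $r$ choices) and $e_w\cap f_i$ (at most $4$ choices, since $f_i$ meets $V(G(\mathcal{C}_i))$ in at most $4$ vertices); a similar count bounds the degree of $v$ inside $\mathcal{C}_{i+1}\sm\mathcal{C}_i$. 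This gives $|E_i|\le 6r|V_i|$ at every stage, hence average degree at most $12r$ in $\mathcal{C}$, contradicting density. Your proposal contains no argument linking ``average degree $>12r$ in $B(G)$'' to the geometric statement about $f_i$ meeting the growing configuration in many vertices, and without that link the claimed saving of $3$ is unsupported; the redundant-absorption bookkeeping you substitute for it is incorrect as it stands.
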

\begin{proof}
Let $({\cal C}_i)_{i \ge 1}$ be as in the proof of Lemma~\ref{largeCtool}. Thus $G({\cal C}_1)$ is a $(3(r-2)+3,3)$-configuration in $G$, at each step we add one new edge of $G$ and at most $r-2$ new vertices of $G$, so $G({\cal C})$ spans at most $u(r-2)+3$ vertices of $G$. To improve on this upper bound by $3$, and so prove the lemma, it suffices to show that there is some step $i$ where the new edge $f_i$ shares at least $5$ vertices with $G({\cal C}_i)$, and so we add at most $(r-2)-3$ new vertices at this step.

Suppose for a contradiction that at each step $i$ the new edge $f_i$ of $G$ shares fewer than $5$ vertices with $G(\mathcal{C}_i)$. For each $i$ we consider $\mathcal{C}_{i+1}\sm\mathcal{C}_i$, denoting its vertex set by $V_i$ and its edge set by $E_i$. We will show that $|E_i| \le 6r|V_i|$. As ${\cal C}_1$ has $3$ vertices and $3$ edges this will imply that each ${\cal C}_i$ has average degree at most $12r$, giving the required contradiction, as $\mathcal{C}$ is dense.

	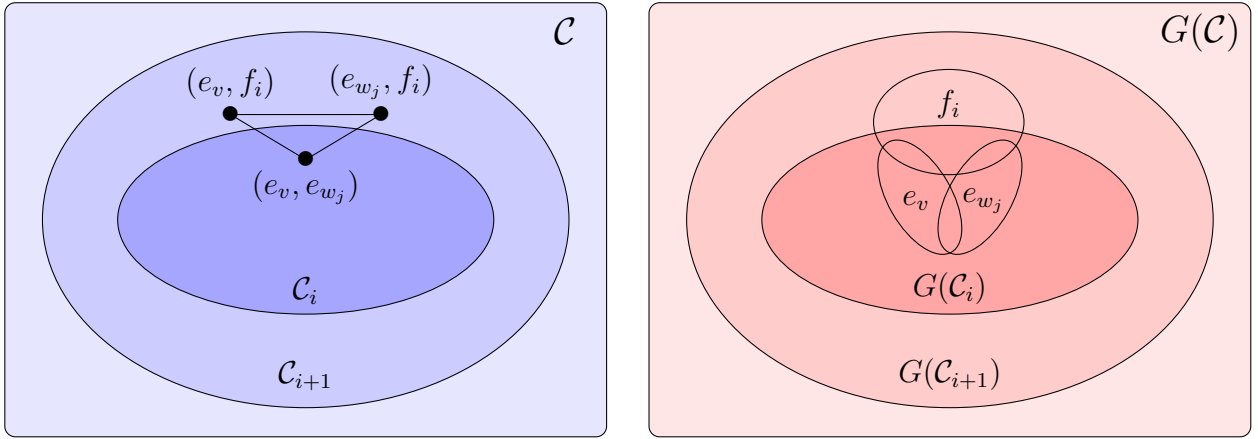
\begin{figure}
		\centering
		\begin{tikzpicture}
		\node[rounded corners, draw, fill=blue!10 ,text height = 5.5cm,minimum     width=8cm, label={[anchor=north west,above=-7mm]40:\large{$\mathcal{C}$}}] (main){};
		\node[ellipse, draw, fill=blue!20, text height =3cm, minimum width = 7cm, minimum height=5cm, label={[anchor=south,above=1mm]270:$\mathcal{C}_{i+1}$}] at (main.center)  (semi) {};
		\node[ellipse, draw,fill=blue!35, text height = 1.5cm, minimum width = 5cm,label={[anchor=south,above=0mm]270:$\mathcal{C}_i$}] at (main.center) (active) {};
		\node[xshift=-1cm, yshift=1.4cm, label={[anchor=south,above=-2mm]$(e_v,f_i)$}] at (main.center) (A) {\large{$\bullet$}};
		\node[xshift=1cm, yshift=1.4cm, label={[anchor=south,above=-2mm]$(e_{w_j},f_i)$}] at (main.center) (B) {\large{$\bullet$}};
		\node[xshift=0cm, yshift=0.8cm, label={[anchor=south,above=-10mm]$(e_{v},e_{w_j})$}] at (main.center) (C) {\large{$\bullet$}};
		\draw (A.center) -- (B.center) -- (C.center) -- (A.center);
		
%		\node[ellipse, draw, align=left, anchor=east, xshift=-2.2cm, yshift=0.6cm, minimum width=2cm, minimum height=0.8cm ] at (active.east) (ev) {$e_v$};
%		\node[ellipse, draw,  align=left, anchor=east, xshift=-2.1cm, yshift=1.2cm, minimum height=1.8cm, minimum width=0.85cm , label={[anchor=south,above=-8mm]90:$f_i$} ] at (active.east) (fi) {};
%		\node[ellipse, draw,  align=left, anchor=east, xshift=-1.5cm, yshift=0.2cm, minimum height=1.4cm, minimum width=1.4cm, label={[anchor=south,above=1mm]270:$e_{w_j}$} ] at (active.east) (wj) {};
		\end{tikzpicture}
		\quad
		\begin{tikzpicture}
		\node[rounded corners, draw, fill=red!10 ,text height = 5.5cm,minimum     width=8cm, label={[anchor=north west,above=-8mm]41:\large{$G(\mathcal{C})$}}] (main){};
		\node[ellipse, draw, fill=red!20, text height =3cm, minimum width = 7cm, minimum height=5cm, label={[anchor=south,above=1mm]270:$G(\mathcal{C}_{i+1})$}] at (main.center)  (semi) {};
		\node[ellipse, draw,fill=red!35, text height = 1.5cm, minimum width = 5cm,label={[anchor=south,above=0mm]270:$G(\mathcal{C}_i)$}] at (main.center) (active) {};
		%\node[ellipse, draw, align=left, anchor=east, xshift=-2.2cm, yshift=0.6cm, minimum width=2cm, minimum height=0.8cm ] at (active.east) (ev) {$e_v$};
		\node[ellipse, draw,  align=left, anchor=east, xshift=-1.515cm, yshift=1.3cm, minimum height=1.4cm, minimum width=2cm , label={[anchor=south,above=-8mm]90:$f_i$} ] at (active.east) (fi) {};
		%\node[ellipse, draw,  align=left, anchor=east, xshift=-1.5cm, yshift=0.2cm, minimum height=1.4cm, minimum width=1.4cm, label={[anchor=south,above=1mm]270:$e_{w_j}$} ] at (active.east) (wj) {};
		\draw[rotate around={30:(-0.4,0.3)}] (-0.4,0.3) ellipse (12pt and 24pt);
		\draw[rotate around={-30:(0.4,0.3)}] (0.4,0.3) ellipse (12pt and 24pt);
		\node[xshift=.45cm,yshift=0.25cm] (0,0) {$e_{w_j}$};
		\node[xshift=-.45cm,yshift=0.25cm] (0,0) {$e_{v}$};
		\end{tikzpicture}
		\caption{At stage $i$, the bow-tie $v=(e_v,f_i)\in \mathcal{C}_{i+1}$ is considered. Edges between $v$ and $\mathcal{C}_i$ in $B(G)$ correspond to choices of $e_{w_j}$ in $G(\mathcal{C}_i)$ such that $(e_v,f_i,e_{w_j})$ forms a triangle in $B(G)$, as depicted in $B(G)$ on the left and in $G$ on the right. Each of $f_i\cap e_v$, $f_i\cap e_{w_j}$ and $e_v\cap e_{w_j}$ consists of a single vertex of $G(\mathcal{C}_i)$, and these vertices are distinct.}\label{fig1}
	\end{figure}

Consider any vertex $v$ of $V_i$, corresponding to some bow-tie $(e_v, f_i)$ of edges in $G$, where $e_v\in G(\mathcal{C}_i)$. We list the edges from $v$ to $V({\cal C}_i)$ as $vw_1,\dots,vw_s$. Each $w_j$ corresponds to a bow-tie $(e_v,e_{w_j})$ for some edge $e_{w_j}$ of $G(\mathcal{C}_i)$ with $|e_{w_j}\cap f_i|=1$; see Figure~\ref{fig1}. To bound $s$, we note that any edge of $G$ is determined by any two of its vertices (as $G$ is linear), so each $e_{w_j}$ is determined by $e_{w_j} \cap e_v$ and $e_{w_j}\cap f_i$. There are trivially at most $r$ choices for the former, but at most $|f_i \cap V(G({\cal C}_i))| \le 4$ choices for the latter, so the degree of $v$ satisfies $s \le 4r$.

Next we estimate the degree of $v$ within $V_i$. Consider any edge $vv_2$ of $\mathcal{C}$ with $v_2 \in V_i$. Then $v_2$ corresponds to the bow-tie $(e_{v_2},f_i)$ with $e_{v_2}\in \mathcal{C}_i$, and moreover $(e_v,e_{v_2})$ must be a bow-tie, corresponding to a vertex $w$ of $\mathcal{C}_i$ such that $vw$ is also an edge of $\mathcal{C}$. As $v$ and $w$ uniquely determine $v_2$, the degree of $v$ within $V_i$ is also at most $4r$. We deduce that ${\cal C}$ has at most $4r|V_i|$ edges between $V_i$ and $V({\cal C}_i)$ and at most $2r|V_i|$ edges within $V_i$, so $|E_i| \le 6r|V_i|$, as required.
\end{proof}

\begin{remark}
We do not attempt to optimise the density of configurations that can be obtained from the proof of Lemma~\ref{denseCs}, as this is only relevant to one case of the proof of Theorem \ref{densityTS}, and so would not give any improvement to our main result.
\end{remark}

\subsection{Completing the proof}

\begin{proof}[Proof of Theorem~\ref{densityTS}]
	We begin by applying Lemma~\ref{dichotomy}. If $B(G)$ contains a component with at least $k^2$ vertices then we are done by Corollary~\ref{largeC}. Otherwise, $B(G)$ contains $\Omega(n^{3}/r^2)$ dense components. Pick a dense component $\mathcal{C}_1$. Let $u_1$ be the number of edges of $G(\mathcal{C}_1)$. We may assume $u_1<k$, otherwise we are done by Lemma~\ref{largeCtool}.
	
By Lemma~\ref{denseCs} there is an $((r-2)u_1,u_1)$-configuration $T_1$ in $G(\mathcal{C}_1)$. We let $B_1$ be the subgraph of $B(G)$ obtained by eliminating all bow-ties containing any edge in $T_1$. We thus delete at most $k e(G) = O(n^2/r^2)$ vertices of $B(G)$, so $B_1$ still has $\Omega(n^{3}/r^2)$ dense components.

Repeating this process, we find a sequence $(T_i)_{i \ge 1}$ of edge-disjoint $((r-2)u_i,u_i)$-configurations, stopping when we first reach some $m$ with $u_1+\dots+u_m\ge k$. Write $u=u_1+\dots+u_{m-1}$. As $u_m \ge k-u$, by  Lemma~\ref{largeCtool} we can find an $((r-2)(k-u)+3,k-u)$-configuration $T_m'$ in $\mathcal{C}_m$. Then $T_1\cup\dots\cup T_{m-1}\cup T_m'$ is an $((r-2)k+3,k)$-configuration.
\end{proof}

\section{Concluding remarks}

As mentioned in the introduction, there is a straightforward reduction of the Brown-Erd\H{o}s-S\'os Conjecture to the case $t=2$. To describe this in our setting, we call a hypergraph \emph{$t$-linear} if no two edges share at least $t$ vertices. We define the $t$-linear density of a $t$-linear $r$-graph $G$ on $n$ vertices by $d_t(G) = e(G) \tbinom{r}{t} / \tbinom{n}{t}$. For any such $G$, by averaging there is a vertex $v$ with degree at least $e(G)r/n$, whose link $G(v)$ is therefore a $(t-1)$-linear $(r-1)$-graph on $n-1$ vertices with $d_{t-1}(G(v)) \ge d_t(G)$. By induction and Theorem \ref{densityTS} we deduce the following.

	 \begin{theorem}\label{densityTS2}
	 	For any $\eps>0$ and $t \ge 2$ there is $r_0=r_0(\eps,t)$ such that for all $r\ge r_0$ and for all $k\ge 3$ there exists $n_0=n_0(r,k)$ such that any $t$-linear $r$-graph $G$ on $n \ge n_0$ vertices with no $((r-2)k+t+1,k)$-configuration has $d_t(G) < \eps$.
	  \end{theorem}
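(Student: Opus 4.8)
The plan is to reduce the $t$-linear statement to the linear ($t=2$) statement of Theorem~\ref{densityTS} by iterated link-taking, exactly as sketched in the paragraph preceding the theorem. Fix $\eps>0$ and $t\ge 2$. Let $r_0(\eps,2)$ be the constant supplied by Theorem~\ref{densityTS}, and define $r_0(\eps,t)=r_0(\eps,2)+(t-2)$. Given $r\ge r_0(\eps,t)$ and $k\ge 3$, suppose for contradiction that $G$ is a $t$-linear $r$-graph on $n$ vertices with $d_t(G)\ge \eps$ and with no $((r-2)k+t+1,k)$-configuration, where $n$ is large (we will say how large below). The core step is: if $H$ is an $s$-linear $q$-graph on $N$ vertices with $d_s(H)\ge \eps$ and $s\ge 3$, then $H$ has a vertex $v$ whose link $H(v)$ is an $(s-1)$-linear $(q-1)$-graph on $N-1$ vertices with $d_{s-1}(H(v))\ge \eps$. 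Indeed, by averaging some vertex $v$ has degree $d_H(v)\ge e(H)q/N$; the link $H(v)=\{e\sm v: v\in e\in H\}$ is $(s-1)$-linear because two edges of $H$ through $v$ sharing $s-1$ vertices of $H(v)$ would share $s$ vertices of $H$; and
\[
d_{s-1}(H(v)) = \frac{d_H(v)\binom{q-1}{s-1}}{\binom{N-1}{s-1}}
\ge \frac{e(H)q}{N}\cdot\frac{\binom{q-1}{s-1}}{\binom{N-1}{s-1}}
= \frac{e(H)\binom{q}{s}}{\binom{N}{s}} = d_s(H) \ge \eps,
\]
using $q\binom{q-1}{s-1}=s\binom{q}{s}$ and $N\binom{N-1}{s-1}=s\binom{N}{s}$.

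Applying this step $t-2$ times to $G$, I obtain a nested sequence of links and finally a linear ($2$-linear) $(r-t+2)$-graph $G'$ on $n-t+2$ vertices with $d_{\lin}(G')=d_2(G')\ge \eps$. Since $r-t+2\ge r_0(\eps,2)$ by our choice of $r_0(\eps,t)$, Theorem~\ref{densityTS} applies with uniformity $r'=r-t+2$: choosing $n$ large enough that $n-t+2\ge n_0(r-t+2,k)$ (so $n_0(r,k):=n_0(r-t+2,k)+t-2$ works), we conclude that $G'$ contains an $((r'-2)k+3,k)$-configuration, i.e.\ $k$ edges of $G'$ spanning at most $(r-t)k+3$ vertices. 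Each such edge of $G'$ lifts to an edge of $G$ by adding back the $t-2$ link vertices that were successively removed (the same $t-2$ vertices for all edges), so these $k$ edges of $G$ span at most $(r-t)k+3+(t-2)=(r-2)k + (t+1) - (k-1)\cdot 0$; more precisely $(r-t)k+3+(t-2) = (r-2)k - (t-2)(k-1) + (t-2) \le (r-2)k+t+1$ since $(t-2)(k-2)\ge 0$. Hence $G$ has an $((r-2)k+t+1,k)$-configuration, contradicting our assumption and completing the proof.

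The argument is essentially routine; the only point requiring a little care is checking that the configuration inequality survives the lifting, i.e.\ that $(r-t)k+3+(t-2)\le (r-2)k+t+1$, which holds because $k\ge 3$ forces $(t-2)(k-2)\ge 0$ (indeed it holds with room to spare unless $t=2$ or $k=2$). I do not anticipate a genuine obstacle: the whole content is packaged in Theorem~\ref{densityTS}, and the reduction only relies on the elementary link-density identity displayed above together with bookkeeping of vertex counts. One small subtlety worth stating explicitly is that the hypothesis $s\ge 3$ is needed to even speak of $(s-1)$-linearity meaningfully and to keep the uniformities in range, which is why the induction terminates precisely at the linear case.
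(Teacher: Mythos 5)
Your proposal is correct and follows essentially the same route as the paper, which deduces the theorem by the same link-taking reduction (phrased as induction on $t$, with the averaging bound $d_H(v)\ge e(H)q/N$ and the identity $q\binom{q-1}{s-1}/\binom{N-1}{s-1}=\binom{q}{s}\binom{N}{s}^{-1}N/q$ giving $d_{s-1}(H(v))\ge d_s(H)$) and then invokes Theorem~\ref{densityTS}. The only blemish is a garbled intermediate algebraic rewriting of $(r-t)k+3+(t-2)$, but the needed inequality $(r-t)k+t+1\le(r-2)k+t+1$ is immediate from $t\ge 2$, so nothing is affected.
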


By analogy with Conjecture~\ref{BESconj2}, we conjecture that the conclusion of Theorem \ref{densityTS2} holds even without the assumption that $r$ is sufficiently large. It would also be interesting to determine whether the bound in Theorem \ref{densityTS2} is tight. A simple random construction (see \cite{BES}) provides $r$-graphs on $n$ vertices with no $((r-2)k+t+1,k)$-configuration and $cn^t$ edges, but here $c$ depends on $r$. Are there $t$-linear $r$-graphs $G$ on $n$ vertices with no $((r-2)k+t+1,k)$-configuration and $d_t(G)$ bounded away from $0$ as $n \to \infty$?
	 	
An old conjecture of Erd\H{o}s~\cite{ErdosGirth}, related to the Brown-Erd\H{o}s-S\'os Conjecture, suggests that there exist Steiner triple systems of arbitrarily high girth. In the terminology of this paper, this can be restated as follows: for any $g \ge 2$ there is a complete linear $3$-graph on $n$ vertices (i.e.\ with every pair covered by an edge) with no $(g+2,g)$-configuration. It is easy to show (see e.g.~\cite[Proposition 7.1]{GKLO}) that complete linear $3$-graphs have $(g+3,g)$-configurations, so this conjecture of Erd\H{o}s would imply tightness of the Brown-Erd\H{o}s-S\'os Conjecture in a strong sense. Furthermore, the weaker question (posed by Lefmann, Phelps and R\"odl~\cite{LPR} and by Ellis and Linial~\cite{EL}) of whether there are linear $3$-graphs with large girth and linear density bounded away from zero was also open until its recent solution independently by Bohman and Warnke~\cite{BW} and by Glock, K\"uhn, Lo and Osthus~\cite{GKLO} in a strong form:~they showed that there are linear $3$-graphs with large girth and linear density approaching $1$.
 
Glock, K\"uhn, Lo and Osthus (see \cite[Conjecture 7.2]{GKLO}) also pose an extension of the conjecture of Erd\H{o}s to Steiner $(n,r,t)$-systems of large girth. In our terminology, for any $k,r,t$ we seek complete $t$-linear $r$-graphs with no $((r-t)k+t+1,k+1)$-configuration, on any set of $n$ vertices where $n$ is large and \emph{admissible}, in that there exist complete $t$-linear $r$-graphs on $n$ vertices (such $n$ were characterised by Keevash~\cite{K}). They show (see \cite[Theorem 7.5]{GKLO}) that there are $t$-linear $r$-graphs with no $((r-t)k+t+1,k+2)$-configuration and $t$-linear density approaching $1$, thus relaxing the conjecture in two ways (allowing an extra edge and relaxing `complete' to `almost complete'). We conjecture the following stronger extension focussing on the minimum number of vertices spanned by a given number of edges:~for any $k,r,t$ there should exist complete $t$-linear $r$-graphs with no $((r-t)k+t,k)$-configuration, and furthermore these should exist on $n$ vertices whenever $n$ is large and admissible.


\begin{thebibliography}{}

	\bibitem{RSapp1}
	N. Alon,
	\emph{Testing subgraphs in large graphs},
	Random Structures and Algorithms 21:359--370,
	2002.
	
	\bibitem{BW}
	T. Bohman and L. Warnke,
	\textit{Large girth approximate Steiner triple systems}, 
	J. Lond. Math. Soc. 100:895--913, 
	2019.
	
	\bibitem{BES}
	W. G. Brown, P. Erd\H os and V. T. S\'os,
	\textit{Some extremal problems on r-graphs.}
	New Directions in the Theory of Graphs, 
	Academic Press, New York, 1973.
	
	\bibitem{CF} D. Conlon and J. Fox, 
	\emph{Graph removal lemmas}, 
	in: Surveys in combinatorics 2013, 1--49, 
	London Math. Soc. Lecture Note Ser. 409, 
	Cambridge Univ. Press, Cambridge, 2013.

	\bibitem{CGLS} 
	D. Conlon, L. Gishboliner, Y. Levanzov and A. Shapira.
	\emph{A new bound for the Brown-Erd\H{o}s-S\'os problem.},
	arXiv:1912.08834,
	2019.

	\bibitem{EL}	
	D. Ellis and N. Linial, 
	\emph{On regular hypergraphs of high girth}, 
	Electron. J. Combin. 21, P1.54, 17pp, 2014.
	
	\bibitem{EFR}
	P. Erd\H{o}s, P. Frankl and V. R\"odl,
	\emph{The asymptotic number of graphs not containing a fixed subgraph and a problem for hypergraphs having no exponent},
	Graphs and Combinatorics, 2:113--121,
	1986.
	
	\bibitem{ErdosGirth}
	P. Erd\H{o}s,
	\textit{Problems and results in combinatorial analysis.}
	Colloquio Internazionale sulle Teorie Combinatorie (Rome, 1973),
	3--17, Accad. Naz. Lincei, 1976.
	
	\bibitem{Erdos81}
	P. Erd\H{o}s,
	\textit{On the combinatorial problems which I would most like to see solved}, Combinatorica 1:25--42, 1981.

		\bibitem{GKLO}
		 S. Glock, D. K\"uhn, A. Lo and D. Osthus,
		 \textit{On a conjecture of Erd\H{o}s on locally sparse Steiner triple systems},
		 Combinatorica 40:363--403,
		 2020.
		 	
	\bibitem{RSapp2}
	W. T. Gowers and J. Long.
	\textit{The length of an $s$-increasing sequence of $r$-tuples},
	Combin. Probab. Comput., to appear.
	
	
	\bibitem{K}
	P. Keevash,
	\textit{The existence of designs},
	arXiv:1401.3665,
	2014.
	
	\bibitem{LPR}
	H. Lefmann, K.T. Phelps and V. R\"odl, 
	Extremal problems for triple systems, 
	J. Combin. Des. 1:379--394,
	1993.
	
	\bibitem{me}
	J. Long,
	\textit{A note on the Brown-Erd\H os-S\'os conjecture in finite groups},
	Combin. Probab. Comput., to appear.
	
	\bibitem{others}
	R. Nenadov, B. Sudakov and M. Tyomkyn,
	\emph{Proof of the Brown--Erd\H{o}s--Sos conjecture in groups}, 
	Math. Proc. Cambridge Philos. Soc., to appear.
	
	\bibitem{Roth}
	K. Roth,
	\emph{On certain sets of integers},
	J. Lond. Math. Soc. 28:104--109,
	1953.
	
	\bibitem{RS}
	I. Ruzsa and E. Szemer\'edi,
	\textit{Triple systems with no six points carrying three triangles.}
	Coll. Math. Soc. J. Bolyai 18:939--945,
	1978.
	
	\bibitem{ST} 
	A. Shapira and M. Tyomkyn,
	\emph{A Ramsey variant of the Brown-Erd\H{o}s-S\'os conjecture},
	arXiv:1910.13546,
	2019.
	
	\bibitem{S74} 
	J. Solymosi,
	\emph{The $(7,4)$-conjecture in finite groups},
	Combin. Probab. Comput. 24:680--686,
	2015.
	
		\bibitem{RothFromTR} 
		J. Solymosi,
		\emph{Note on a generalization of Roth's theorem},
		Disc. Comp. Geom. 25:825--827,
		2003.
	
	\bibitem{SRL} 
	E. Szemer\'edi,
	\emph{Regular partitions of graphs},
	Colloques Internationaux C.N.R.S. 260:399--401,
	1976.
	
	
\end{thebibliography}
\end{document}